\title[Coordinate systems in Banach spaces and lattices]{Coordinate systems in Banach spaces and lattices\\  (Systèmes de coordonnées dans les espaces et treillis de Banach)}
\author{A.~Avil\'es}
\address{Universidad de Murcia, Departamento de Matem\'aticas, Campus de Espinardo 30100 Murcia, Spain.}
\email{avileslo@um.es}
\author {C.~Rosendal}
\address{Department of Mathematics\\University of Maryland\\4176 Campus Drive - William E. Kirwan Hall\\College Park, MD 20742-4015\\USA}
\email{rosendal@umd.edu}
\urladdr{sites.google.com/view/christian-rosendal/}
\author{M.~A.\ Taylor}
\address{Department of Mathematics\\
ETH Z\"urich, Ramistrasse 101, 8092 Z\"urich, Switzerland.
} \email{mitchell.taylor@math.ethz.ch}
\author{P.~Tradacete}
\address{Instituto de Ciencias Matem\'aticas (CSIC-UAM-UC3M-UCM)\\
Consejo Superior de Investigaciones Cient\'ificas\\
C/ Nicol\'as Cabrera, 13--15, Campus de Cantoblanco UAM\\
28049 Madrid, Spain.}
\email{pedro.tradacete@icmat.es}
\newcommand{\norm}[1]{\lVert#1\rVert}
\newcommand{\NORM}[1]{\Big\lVert#1\Big\rVert}
\newcommand{\BNORM}[1]{\Bigg\lVert#1\Bigg\rVert}
\newcommand{\forkindep}[1][]{\mathop{\mathop{\vcenter{\hbox{\oalign{\noalign{\kern-.3ex}
\hfil$\vert$\hfil\cr\noalign{\kern-.7ex}$\smile$\cr\noalign{\kern-.3ex}}}}}\displaylimits_{#1}}}
\newcommand{\maths}[1]{\[\begin{split}{#1}\end{split}\]}
\newcommand{\conv}[2]{\mathop{\underset{#2}{\overset{#1}\longrightarrow}}}
\newcommand{\nconv}[2]{\mathop{\underset{#2}{\overset{#1}\not\!\!\longrightarrow}}}
\newcommand{\maps}[1]{\mathop{\overset{#1}\longrightarrow}}
\tikzset{negated/.style={
        decoration={markings,
            mark= at position 0.5 with {
                \node[transform shape] (tempnode) {$\backslash$};
            }
        },
        postaction={decorate}
    }
}
\newcommand {\N}{\mathbb N}
\newcommand {\R}{\mathbb R}
\newcommand{\eps}{\varepsilon}
\newcommand{\tom} {\emptyset}
\newcommand{\saa}{\Rightarrow}
\newcommand{\equi}{\Leftrightarrow}
\newcommand {\Del}{ \; \Big| \;}
\newcommand {\del}{ \; \big| \;}
\newcommand {\ku} {\mathcal}
\newcommand{\inv}{^{-1}}
\newcommand {\e} {\exists}
\renewcommand {\a} {\forall}
\theoremstyle{plain}
\newtheorem{thm}{Theorem}[section]
\newtheorem*{theorem*}{Theorem}
\newtheorem{cor}[thm]{Corollary}
\newtheorem{lemme}[thm]{Lemma}
\newtheorem{prop} [thm] {Proposition}
\newtheorem{defi} [thm] {Definition}
\newtheorem{prob}[thm]{Problem}
\theoremstyle{definition}
\newtheorem{rem}[thm]{Remark}
\newtheorem{exa}[thm]{Example}
\definecolor{groen}{rgb}{0,0.5,.7}
\definecolor{gul}{rgb}{0.94,0.8,0}
\definecolor{blaa}{rgb}{0.16,0,0.6}
\definecolor{roed}{rgb}{1,0,0}
\begin{document}
\subjclass[2020]{Primary: 46B42, 46B15, Secondary: 03E15, 46H40, 54A20}
\keywords{Order bases; Schauder bases; Filter bases; Analytic determinacy (bases d'ordre, bases de Schauder, bases de flitre, d\'etermination analytique))}

%%%%%%%%%%%%%%%%%%%%%%%%%%%%%%%%%%%%%%%%%%%%%%%%%%%%%%%%%%%
%%%%%%%%%%%%%%%%%%%%%%%%%%%%%%%%%%%%%%%%%%%%%%%%%%%%%%%%%%%
%%%%%%%%%%%%%%%%%%%%%%%%%%%%%%%%%%%%%%%%%%%%%%%%%%%%%%%%%%%
%%%%%%%%%%%%%%%%%%%%%%%%%%%%%%%%%%%%%%%%%%%%%%%%%%%%%%%%%%%
%%%%%%%%%%%%%%%%%%%%%%%%%%%%%%%%%%%%%%%%%%%%%%%%%%%%%%%%%%%
%%%%%%%%%%%%%%%%%%%%%%%%%%%%%%%%%%%%%%%%%%%%%%%%%%%%%%%%%%%
%%%%%%%%%%%%%%%%%%%%%%%%%%%%%%%%%%%%%%%%%%%%%%%%%%%%%%%%%%%
%%%%%%%%%%%%%%%%%%%%%%%%%%%%%%%%%%%%%%%%%%%%%%%%%%%%%%%%%%%
%%%%%%%%%%%%%%%%%%%%%%%%%%%%%%%%%%%%%%%%%%%%%%%%%%%%%%%%%%%
%%%%%%%%%%%%%%%%%%%%%%%%%%%%%%%%%%%%%%%%%%%%%%%%%%%%%%%%%%%
%%%%%%%%%%%%%%%%%%%%%%%%%%%%%%%%%%%%%%%%%%%%%%%%%%%%%%%%%%%
%%%%%%%%%%%%%%%%%%%%%%%%%%%%%%%%%%%%%%%%%%%%%%%%%%%%%%%%%%%
%%%%%%%%%%%%%%%%%%%%%%%%%%%%%%%%%%%%%%%%%%%%%%%%%%%%%%%%%%%
%%%%%%%%%%%%%%%%%%%%%%%%%%%%%%%%%%%%%%%%%%%%%%%%%%%%%%%%%%%
%%%%%%%%%%%%%%%%%%%%%%%%%%%%%%%%%%%%%%%%%%%%%%%%%%%%%%%%%%%
%%%%%%%%%%%%%%%%%%%%%%%%%%%%%%%%%%%%%%%%%%%%%%%%%%%%%%%%%%%

\begin{abstract}
Using Baire category methods from descriptive set theory, we answer several questions from the literature regarding different  notions of bases in Banach spaces and lattices. 

For the case of Banach lattices, our results follow from a general theorem stating that (under the assumption of analytic determinacy),  every $\sigma$-order basis $(e_n)$ for a Banach lattice $X=[e_n]$ is  a uniform basis for $X$ and every uniform basis is Schauder. Moreover, the notions of order and $\sigma$-order bases coincide when $X=[e_n].$

Regarding Banach spaces, we address two problems concerning filter Schauder bases, i.e.,  in which the norm convergence of partial sums is replaced by norm convergence along some appropriate filter on $\N$. We first provide an example of a Banach space admitting such a filter Schauder basis, but no ordinary Schauder basis. Then,  we show that every filter Schauder basis with respect to an analytic filter is also a filter Schauder basis with respect to a Borel filter.
\vspace{0.3cm}

\noindent\textsc{Résumé.} En utilisant des méthodes de catégorie de Baire et de théorie descriptive des ensembles, nous répondons à plusieurs questions de la littérature concernant différentes notions de base dans les espaces et les treillis de Banach.

Dans le cas des treillis de Banach, nos résultats découlent d’un théorème général affirmant que (sous l’hypothèse de la détermination analytique) toute base de $\sigma$-ordre $(e_n)$ d’un treillis de Banach $X=[e_n]$ est une base uniforme pour $X$, et toute base uniforme est de Schauder. De plus, les notions de base d’ordre et de $\sigma$-ordre coïncident lorsque $X=[e_n]$.

Concernant les espaces de Banach, nous traitons deux problèmes concernant les bases de Schauder relatives à un filtre, c’est-à-dire pour lesquelles la convergence en norme des sommes partielles est remplacée par la convergence en norme le long d’un filtre bien choisi sur $\N$. Nous fournissons d’abord un exemple d’espace de Banach admettant une telle base de Schauder relative à un filtre, mais aucune base de Schauder ordinaire. Ensuite, nous montrons que toute base de Schauder relative à un filtre analytique est également une base de Schauder relative à un filtre borélien.
\end{abstract}

\maketitle

%%%%%%%%%%%%%%%%%%%%%%%%%%%%%%%%%%%%%%%%%%%%%%%%%%%%%%%%%%%
%%%%%%%%%%%%%%%%%%%%%%%%%%%%%%%%%%%%%%%%%%%%%%%%%%%%%%%%%%%
%%%%%%%%%%%%%%%%%%%%%%%%%%%%%%%%%%%%%%%%%%%%%%%%%%%%%%%%%%%
%%%%%%%%%%%%%%%%%%%%%%%%%%%%%%%%%%%%%%%%%%%%%%%%%%%%%%%%%%%
%%%%%%%%%%%%%%%%%%%%%%%%%%%%%%%%%%%%%%%%%%%%%%%%%%%%%%%%%%%
%%%%%%%%%%%%%%%%%%%%%%%%%%%%%%%%%%%%%%%%%%%%%%%%%%%%%%%%%%%
%%%%%%%%%%%%%%%%%%%%%%%%%%%%%%%%%%%%%%%%%%%%%%%%%%%%%%%%%%%
%%%%%%%%%%%%%%%%%%%%%%%%%%%%%%%%%%%%%%%%%%%%%%%%%%%%%%%%%%%
%%%%%%%%%%%%%%%%%%%%%%%%%%%%%%%%%%%%%%%%%%%%%%%%%%%%%%%%%%%
%%%%%%%%%%%%%%%%%%%%%%%%%%%%%%%%%%%%%%%%%%%%%%%%%%%%%%%%%%%

\section{Introduction}
\subsection{Order bases in Banach lattices}
Recall that a Banach lattice is a Banach space $X$ equipped with an order relation $\leqslant$  satisfying:
\begin{enumerate}
    \item $\forall x,y,z\in X  \quad x\leqslant y \;\Rightarrow\; x+z\leqslant y+z$.
    \item $\forall x,y\in X\;  \forall \lambda\in\mathbb R_+\quad x\leqslant y \;\Rightarrow \; \lambda x\leqslant \lambda y$.
    \item Any two elements $x,y\in X$ have a least upper bound $x\vee y$ and a greatest lower bound $x\wedge y$.
    \item For any $x\in X$, set $|x|=x\vee(-x)$. Then, if $|x|\leqslant |y|$, also $\|x\|\leqslant \|y\|$.
\end{enumerate}
Let $X$ be a Banach lattice. Then the lattice structure on $X$ gives rise to three classical notions of sequential convergence, not available in a general Banach space. Namely,
\begin{itemize}
\item a sequence $(x_n)_{n=1}^\infty$ {\em converges uniformly} to $x$, denoted $x_n\maps{\sf u}x$, if there is some $z\in X_+$  so that
$$
\a m \;\a^\infty n\; |x_n-x|\leqslant \tfrac zm,
$$
\item a sequence $(x_n)_{n=1}^\infty$ {\em $\sigma$-order converges} to $x$, denoted $x_n\maps{\sf \sigma \sf o}x$, if there is some sequence $z_m\downarrow 0$ in $X$ so that
$$
\a m \;\a^\infty n\; |x_n-x|\leqslant z_m,
$$
\item a sequence $(x_n)_{n=1}^\infty$ {\em order converges} to $x$, denoted $x_n\maps{\sf o}x$, if there is some net $z_{\mu}\downarrow 0$ in $X$ so that
$$
\a  \mu\;\a^\infty n\; |x_n-x|\leqslant z_\mu.
$$
\end{itemize}
Here the notation $\a^\infty n$ means for all but finitely many $n$, i.e., $\e N\;\a n\geqslant N$, while $z_m\downarrow 0$ and $z_{\mu}\downarrow 0$ mean that $(z_m)$ and $(z_\mu)$ are decreasing and have infimum $0$ in $X$. It can be shown that, in all cases above, the limit is unique whenever it exists. Thus, if $C$ is one of the above notions of convergence and  $\sum_{n=1}^\infty x_n$ is a series in $X$, we can unambiguously write
$$
x=^C\sum_{n=1}^\infty x_n
$$
to denote that the sequence of partial sums $(\sum_{m=1}^n x_m)$ $C$-converges to $x$.

 All three notions of convergence are evidently compatible with the algebraic structure of $X$, in the sense that if the sequence of sums of $(x_n)$ and $(y_n)$ converge to $x$ and $y$ respectively, then the sequence of sums of $(x_n+y_n)$ converges to $x+y$ and similarly for scalar products. Nevertheless, neither uniform nor order convergence arise in general from Hausdorff topologies on $X$ (see \cite[Section 18]{Tay} for nets).

As is evident from the definitions, uniform convergence implies norm convergence and $\sigma$-order convergence, whereas $\sigma$-order convergence implies order convergence. However, in the absence of other hypotheses on $X$, no other implications hold (see \cite[Examples 1.2 and 1.3]{taylor} and \cite[Example 18.5]{Tay}), which is recorded in Figure \ref{diagram1}.

\begin{figure}[!htb]\label{diagram1}
\begin{tikzcd}
{\sf uniform} \arrow[-{Implies},double]{d}\arrow[-{Implies},double]{r}& {\sf norm} \arrow[-{Implies},double,negated]{d}\\
 \sigma-{\text{}\sf order}\arrow[-{Implies},double,negated]{ur} \arrow[-{Implies},double]{r}{}  &{\text{}\sf order}\arrow[-{Implies},double,negated,bend right=-30]{l}{} &\\
\end{tikzcd}
\caption{Implications between convergence types.}
\end{figure}
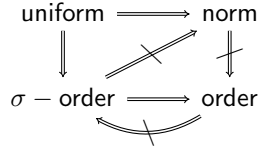
We now define bases associated with the above convergences.
\begin{defi}\label{C-basis}
Let $X$ be a Banach lattice and  $C$ one of the following convergence types: norm, uniform, order or $\sigma$-order.  Then a sequence $(e_n)_{n=1}^\infty$ in $X$ is said to be a {\em $C$-basis} for $X$ provided that, for every $x\in X$, there is a unique sequence of scalars $(a_n)\in \R^\N$ so that  
$$
x=^C\sum_{n=1}^\infty a_ne_n.
$$
\end{defi}
For example, it is easy to see that the standard unit vector sequence $(e_n)$ forms a $\sigma$-order basis for each of the spaces $\ell_p$, $1\leqslant p\leqslant \infty$, and also for $c_0$. We are thus in the unfamiliar situation that the same sequence $(e_n)$ is a $\sigma$-order basis both for $\ell_\infty$ and its subspace (even sublattice) $c_0$. In particular, we see that the norm-closed linear span $[e_n]$ of $(e_n)$ may be strictly smaller than the lattice $X$ for which it is a $\sigma$-order basis.

When $(e_n)$ is a $C$-basis for $X$, we may define functionals $e^\sharp_k:X\maps{}\R$ by letting
$$
{e^\sharp_k}(x)=a_k,
$$
where $(a_n)$ is the uniquely defined sequence referenced above. Similarly, we let $P_m:X\maps{} [e_1,\ldots, e_m]$ denote the corresponding sequence of basis projections,
$$
P_m(x)=\sum_{n=1}^me^\sharp_n(x)e_n.
$$
Since the sequence constantly equal to $e_n$ will $C$-converge to $e_n$, we find that  ${e^\sharp_k}(e_n)=\delta_{k,n}$ for all $k,n$, that is, the functionals $e^\sharp_k$ are {\em biorthogonal} to the sequence  $(e_n)$.
Observe however that a priori it is not clear that the functionals $e^\sharp_k$ or the operators $P_m$ are continuous (with respect to the norm topology on $X$; Banach lattices often admit no order continuous linear functionals).

Norm bases are of course more commonly known as {\em Schauder bases} and we shall employ that terminology here. Moreover, it is a classical result \cite[p.~111]{Banach} that the biorthogonal functionals $e^\sharp_k$ associated  to a Schauder basis are always continuous. 

Biorthogonal functionals associated with some sequence  are typically denoted by $e^*_k$, but, since the very continuity of the functionals is at play here, we shall only use the notation $e^*_k$ if we already know that they are continuous.

In our first main theorem, which settles the relationships between the different types of bases, for one of the implications we resort to an additional set-theoretical axiom, namely the determinacy of certain infinite games on $\N$ \cite[Definition 26.3]{Kechris}. Nevertheless, this usage should not be too disturbing as analytic determinacy is arguably part of the right set-theoretical foundations of mathematics. In any case, what we actually use in our argument is the weaker assumption that ${\bf \Sigma}^1_2$ sets are Baire measurable.

\begin{thm}[Analytic determinacy]\label{super}
Suppose $(e_n)$ is a sequence of vectors in a Banach lattice $X=[e_n]$ and $(e^\sharp_n)$ is a sequence of (possibly discontinuous) biorthogonal functionals for $(e_n)$. Consider the following properties:
\begin{enumerate}
\item\label{order basis} $(e_n)$ is an order basis for $X$ with corresponding functionals $(e^\sharp_n)$,
\item\label{sigma-order basis} $(e_n)$ is a $\sigma$-order basis for $X$  with corresponding functionals $(e^\sharp_n)$,
\item\label{uniform basis} $(e_n)$ is a uniform basis for $X$  with corresponding functionals $(e^\sharp_n)$,
%\item\label{norm} $(e_n)$ is a norm basis for $X$ with corresponding functionals $(e^\sharp_n)$,
\item\label{schauder} $(e_n)$ is a Schauder basis for $X$ with corresponding functionals $(e^\sharp_n)$.
\end{enumerate}
Then  (1)$\Leftrightarrow$(2)$\saa$(3)$\saa$(4). In particular, the $e_n^\sharp$ are continuous in all the kinds of bases considered above.
\end{thm}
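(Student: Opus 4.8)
The plan is to split each required implication into the \emph{existence} and the \emph{uniqueness} of the relevant expansion, to read off from Figure \ref{diagram1} which half is automatic, and to isolate the two genuinely substantial ingredients. Since $X=[e_n]$ is separable it is a Polish space, and I will use biorthogonality throughout in the form $e^\sharp_k(P_m x)=e^\sharp_k(x)$ for $k\le m$ and $e^\sharp_k(P_m x)=0$ for $k>m$. Half of the bookkeeping is free: because a stronger convergence implies a weaker one, a null series for the stronger notion is a null series for the weaker one, so the \emph{uniqueness} halves of $(1)\saa(2)$ and $(2)\saa(3)$ are immediate from the hypothesised basis; dually the \emph{existence} halves of $(2)\saa(1)$ and $(3)\saa(4)$ are immediate since the partial sums already converge in the stronger sense. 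What remains is (I) the norm-continuity of the functionals $e^\sharp_n$, which will deliver the missing uniqueness in $(3)\saa(4)$, and (II) the \emph{upgrading} of a given expansion from a weaker to a stronger mode of convergence (order $\rightsquigarrow$ $\sigma$-order $\rightsquigarrow$ uniform), which will deliver the missing existence in $(1)\saa(2)$, $(2)\saa(3)$ and, applied to a null series, the missing uniqueness in $(2)\saa(1)$.

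For (I) I would argue by automatic continuity. Fix $k$. By definition $e^\sharp_k(x)=a_k$ where $x=^{C}\sum_n a_n e_n$, and for the uniform and $\sigma$-order notions the \emph{witness} of convergence is a single vector $z\in X_+$, respectively a single sequence $(z_m)$ in $X_+$ — in either case a point of a Polish space. Hence the graph $\{(x,a_k): e^\sharp_k(x)=a_k\}$ is the projection of a Borel set and is therefore analytic; being the graph of a (by uniqueness, well-defined and additive) function, $e^\sharp_k$ is Baire measurable because analytic sets have the Baire property \cite{Kechris}. A Baire-measurable additive functional on a Polish vector space is continuous (Pettis), so each $e^\sharp_k$ is norm-continuous. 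This settles $(3)\saa(4)$ completely: existence of the norm expansion is free, and if $\sum_n c_n e_n=0$ in norm then, applying the continuous $e^\sharp_k$ to the partial sums and using biorthogonality, $c_k=e^\sharp_k(0)=0$, giving uniqueness; by the classical criterion $(e_n)$ is then a Schauder basis and the $e^\sharp_k$ are automatically its continuous coordinate functionals \cite{Banach}.

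The heart of the matter is (II), and this is where I expect the real obstacle and where analytic determinacy enters. Note first that the upgrades \emph{cannot} follow from Figure \ref{diagram1} alone: the displayed non-implications show that for arbitrary sequences neither does $\sigma$-order convergence imply uniform convergence nor order convergence imply $\sigma$-order convergence (moving bumps in a principal ideal $C(K)$ already refute both). Any proof must therefore use that $(e_n)$ spans $X$. The plan is to set up an infinite game in which one player proposes, along the tails $x-P_Nx$, a configuration threatening to violate the stronger convergence — for the uniform upgrade, a sequence of scales forcing any candidate dominator to be too small; for the order-to-$\sigma$-order upgrade, a cofinal choice inside the witnessing net — while the opponent answers with dominating vectors drawn from $X_+$. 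The payoff set is arithmetical in the moves and the coefficient data, hence analytic, so under analytic determinacy one of the players has a winning strategy. A strategy for the proposer would, via density of $\operatorname{span}(e_n)$ and the spanning hypothesis $X=[e_n]$, manufacture an element of $X$ with no admissible expansion, contradicting that $(e_n)$ is a basis; therefore the responder wins, and unwinding the strategy produces exactly the required single dominator $z\in X_+$ (for the uniform expansion) or the required decreasing sequence $z_m\downarrow 0$ (for the $\sigma$-order expansion). In effect, determinacy is what replaces the uncountable net of an order-convergence witness by a countable, and then a single, dominator — the one step not available in $\mathsf{ZFC}$.

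Assembling the pieces: (II) gives the existence upgrade order $\rightsquigarrow$ $\sigma$-order, hence $(1)\saa(2)$ (its uniqueness being free); the same upgrade applied to an order-null series gives $(2)\saa(1)$ (its existence being free); and the upgrade $\sigma$-order $\rightsquigarrow$ uniform gives $(2)\saa(3)$ (its uniqueness being free). Combined with $(3)\saa(4)$ and the continuity from (I), we obtain $(1)\Leftrightarrow(2)\saa(3)\saa(4)$ with all $e^\sharp_n$ continuous. The main obstacle, and the sole appeal to set-theoretic strength beyond $\mathsf{ZFC}$, is the game-theoretic upgrade of convergence in (II); the automatic-continuity argument (I) and all the uniqueness/existence reductions are carried out in $\mathsf{ZFC}$.
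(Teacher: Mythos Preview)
Your proposal diverges from the paper at both substantive steps, and in (I) you make a complexity miscalculation that is precisely the obstruction the paper has to work around.

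In (I) you claim that for the $\sigma$-order notion the witness $(z_m)$ is ``a point of a Polish space'' and hence the graph of $e^\sharp_k$ is analytic. But the witness must satisfy $z_m\downarrow 0$, and the clause $\inf_m z_m=0$ reads $\forall y>0\;\exists m\;\; y\not\leqslant z_m$, which is ${\bf \Pi}^1_1$ in $(z_m)$, not Borel. Projecting over a ${\bf \Pi}^1_1$ set of witnesses yields a ${\bf \Sigma}^1_2$ graph, not an analytic one, and ${\bf \Sigma}^1_2$ sets need not have the Baire property in $\mathsf{ZFC}$. The paper's sole appeal to analytic determinacy is exactly here (Theorem~\ref{master2}): under ${\bf \Sigma}^1_1$-determinacy, ${\bf \Sigma}^1_2$ sets have the Baire property, so the biorthogonal operator $E\colon X\to\R^\N$ is Baire measurable and hence continuous. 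No game on convergence is ever set up.

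Once the $e^\sharp_n$ are continuous, order boundedness of $(P_mx)_m$ gives $\sup_m\norm{P_mx}<\infty$, Grunblum's criterion makes $(e_n)$ Schauder, and the upgrade $\sigma$-order $\rightsquigarrow$ uniform is a $\mathsf{ZFC}$ citation of \cite[Theorem~3.1]{taylor}: for a Schauder basic sequence, $P_mx\xrightarrow{\sigma\sf o}x$ for all $x$ is equivalent to $P_mx\xrightarrow{\sf u}x$ for all $x$. Your game sketch in (II) is never made concrete (the payoff set, the coding of moves, and why a proposer's strategy would contradict spanning are all left unspecified), and in view of the above it is unnecessary for $(2)\saa(3)$.

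The equivalence $(1)\Leftrightarrow(2)$ requires no determinacy at all. The paper observes (Proposition~\ref{prop conv agree}) that in a \emph{separable} Banach lattice order convergence and $\sigma$-order convergence of sequences coincide: if $A$ is the (downward directed) set of eventual upper bounds of $(|x_n-x|)$ and $\inf A=0$, take a countable norm-dense $\{d_m\}\subseteq A$ and set $z_m=d_1\wedge\cdots\wedge d_m$; then $z_m\downarrow 0$. So the order-to-$\sigma$-order upgrade is elementary, and your proposed game for it is superfluous.

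Your handling of $(3)\saa(4)$ is essentially the paper's (Theorem~\ref{master1}); in fact Lemma~\ref{lem:uniform} shows uniform convergence is Borel outright, so the graph of $E$ is Borel rather than merely analytic, and the Pettis step is not needed.
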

As hinted above, analytic determinacy is only needed for part of the theorem; namely, the implication (2)$\saa$(3). All other implications hold without any additional set-theoretical assumptions. Examples showing that (4)$\not\saa$(3) and (3)$\not\saa$(2)  can be found in \cite[Example 6.2]{taylor} and \cite[Example 9.3]{taylor}, respectively.

Coupled with \cite[Theorem 2.1]{taylor}, we obtain the following characterisation of uniform bases for Banach lattices.

\begin{cor}\label{cor:uniform}
Let $(e_n)$ be a sequence of non-zero vectors in a Banach lattice $X$ such that $X=[e_n]$. The following conditions are equivalent:
\begin{enumerate}
\item $(e_n)$  is a uniform basis for $X$,
\item $(e_n)$ is a Schauder basis for $X$ so that, for every $x\in X$, the sequence of partial sums
$$
P_mx=\sum_{n=1}^me^*_n(x)e_n
$$
is order bounded,
\item\label{bibasic ineq} there is a constant $M$ so that, for all finite tuples of scalars $(a_n)_{n=1}^m$ one has 
\maths{
\BNORM{    \bigvee_{k=1}^m\Big|\sum_{n=1}^ka_ne_n\Big|  \,  }\leqslant M\Bigg\|\sum_{n=1}^m a_ne_n\Bigg\|.
}
\end{enumerate}
\end{cor}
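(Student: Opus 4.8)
The plan is to run the cycle $(1)\Rightarrow(2)\Rightarrow(3)\Rightarrow(1)$, using Theorem \ref{super} and a Banach--Steinhaus argument for the first two implications and citing \cite[Theorem 2.1]{taylor} for the last. Before starting, I would record the elementary observation that (3) forces the ordinary basis inequality $\NORM{\sum_{n=1}^k a_ne_n}\leqslant M\NORM{\sum_{n=1}^m a_ne_n}$ for all $k\leqslant m$ (since $\bigvee_{j}\big|\sum_{n=1}^j a_ne_n\big|$ dominates each $\big|\sum_{n=1}^k a_ne_n\big|$), so that, together with $X=[e_n]$ and $e_n\neq 0$, condition (3) already makes $(e_n)$ a Schauder basis; this is what makes (3) comparable to (2).

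For $(1)\Rightarrow(2)$: by Theorem \ref{super} a uniform basis $(e_n)$ is a Schauder basis with the \emph{same} (now continuous) biorthogonal functionals, so the Schauder partial sums $P_mx=\sum_{n=1}^m e^*_n(x)e_n$ coincide with the partial sums $\sum_{n=1}^m e_n^\sharp(x)e_n$, which by hypothesis converge uniformly to $x$. Taking the scale parameter $1$, there is $z\in X_+$ with $|P_mx-x|\leqslant z$ for all $m\geqslant N$, hence $|P_mx|\leqslant |x|+z$ for those $m$; absorbing the finitely many remaining terms into $w_x:=|x|+z+\bigvee_{m<N}|P_mx|$ shows $(P_mx)_m$ is order bounded.

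For $(2)\Rightarrow(3)$, which is the only genuinely new piece of work here, I would argue by uniform boundedness on the seminorms coming from the lattice suprema. For each finite $F\subseteq\N$ define $R_F\colon X\to X_+$ by $R_F(x)=\bigvee_{k\in F}|P_kx|$; since the basis projections $P_k$ are bounded (classical, as recalled in the excerpt) and the lattice operations are norm continuous, $R_F$ is norm continuous, and it is positively homogeneous and subadditive, so $p_F(x):=\norm{R_F(x)}$ is a continuous seminorm on $X$. Hypothesis (2) gives, for each $x$, a vector $w_x\in X_+$ with $|P_kx|\leqslant w_x$ for all $k$, whence $R_F(x)\leqslant w_x$ and $p_F(x)\leqslant\norm{w_x}$ for every finite $F$; thus $\{p_F\}$ is pointwise bounded, and the uniform boundedness principle for families of continuous seminorms on a Banach space yields $M:=\sup_F\sup_{\norm x\leqslant1}p_F(x)<\infty$. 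Applying this to $x=\sum_{n=1}^m a_ne_n$ and $F=\{1,\dots,m\}$, and using uniqueness of the (finite) Schauder expansion to identify $P_kx=\sum_{n=1}^k a_ne_n$ for $k\leqslant m$, we get $R_F(x)=\bigvee_{k=1}^m\big|\sum_{n=1}^k a_ne_n\big|$ and hence exactly the inequality in (3).

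Finally, $(3)\Rightarrow(1)$ is \cite[Theorem 2.1]{taylor}. I expect this to be the real obstacle: passing from the norm inequality in (3) to a \emph{single} positive vector $z\in X_+$ witnessing uniform convergence $P_mx\maps{\sf u}x$ is not a formal consequence of norm estimates (a norm-convergent, order-bounded sequence need not converge uniformly), and it is precisely the content we import from \cite{taylor}. Thus the contribution of the present paper to the corollary is the implication $(1)\Rightarrow(4)$ of Theorem \ref{super}, which lets a uniform basis be handled as a Schauder basis in both $(1)\Rightarrow(2)$ and the Banach--Steinhaus step of $(2)\Rightarrow(3)$.
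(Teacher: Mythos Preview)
Your proof is correct and follows essentially the same cycle as the paper. The only differences are in where detail is supplied versus cited: for $(2)\Rightarrow(3)$ you give the Banach--Steinhaus argument for the seminorms $p_F$ explicitly, whereas the paper simply invokes \cite[Theorem 3.1]{taylor} (restated here as Theorem \ref{bibasis theorem}, specifically $(iv)\Rightarrow(vi)$); conversely, for $(3)\Rightarrow(1)$ the paper spells out the Grunblum step and the uniqueness of the uniform expansion (via uniform $\Rightarrow$ norm convergence plus the Schauder property), while you package this into the citation of \cite{taylor}. One cosmetic point: since Corollary \ref{cor:uniform} is stated without set-theoretic hypotheses, it is cleaner to cite Theorem \ref{master1} directly for $(1)\Rightarrow(2)$ rather than Theorem \ref{super}, as the paper does.
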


\begin{rem}
    It is a classical fact (see \cite[Remark 1.3]{Tay}) that, if $X$ is a Banach lattice of measurable functions (i.e.~an ideal in the space of measurable functions $L_0(\Omega,\Sigma,\mu)$ for some semi-finite measure space $(\Omega,\Sigma,\mu)$), we have $f_n\conv{\sf o}{n}f$ if and only if $f_n\conv{a.e.}{n}f$ and, moreover,  there exists a $g\in X_+$ satisfying  $|f_n|\leqslant g$ for all $n$ (i.e., the sequence $(f_n)$ is order bounded).  Therefore, a uniform basis $(e_n)$ of a Banach lattice $X$ can be thought of as a coordinate system which guarantees both norm and dominated almost everywhere convergence of the basis expansions. 
    
    Statement (3) of Corollary~\ref{cor:uniform}  is simply the standard inequality for Schauder basic sequences with the supremum $\bigvee_{k=1}^m$ pulled inside the norm. The equivalence between (1) and (3) in Corollary~\ref{cor:uniform} therefore shows that bounding the maximal function of a basic sequence is equivalent to establishing strong convergence properties of the series, even in the general setting of Banach lattices. 

We remark that  ${\sf u}$-bases (and ${\sf u}$-basic sequences; see Remark~\ref{rem on Cor13}) occur frequently in applications. For example, it follows from Doob's inequality  that martingale difference sequences in $L_p(\mu)$ ($1<p<\infty$ and $\mu$ a probability measure) are ${\sf u}$-basic (see \cite[Example 2.11]{taylor}). On the other hand, the combination of the Burkholder--Davis--Gundy and Khintchine inequalities yields that unconditional blocks of the Haar basis in $L_1[0,1]$ are ${\sf u}$-basic (see \cite[Proposition 8.2]{taylor}), and the Carleson--Hunt theorem  \cite{Hunt} establishes the inequality in Corollary~\ref{cor:uniform} (3) for the trigonometric basis. For several more examples and non-examples of ${\sf u}$-basic sequences, the reader may consult \cites{Tay, taylor}. 

\end{rem}

Analogously to Corollary~\ref{cor:uniform},  we may characterise $\sigma$-order bases as follows.

\begin{cor}[Analytic determinacy]\label{cor:order}
The following conditions are equivalent for a  sequence $(e_n)$ of non-zero vectors in a Banach lattice $X=[e_n]$:
\begin{enumerate}
\item $(e_n)$  is a $\sigma$-order basis for $X$,
\item $(e_n)$ is a uniform  basis for $X$ such that $0=^{\sigma\sf o}\sum_{n=1}^\infty 0 e_n$ is the unique $\sigma$-order expansion of $0$.
\end{enumerate}
\end{cor}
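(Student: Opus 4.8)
The plan is to obtain the corollary directly from Theorem~\ref{super}, the only additional ingredient being the elementary implication that uniform convergence of a sequence forces its $\sigma$-order convergence (Figure~\ref{diagram1}). Beyond Theorem~\ref{super} essentially no work is required, and analytic determinacy enters only through that theorem and only for the direction (1)$\Rightarrow$(2).

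For (1)$\Rightarrow$(2), I would argue as follows. Suppose $(e_n)$ is a $\sigma$-order basis for $X=[e_n]$. By definition every $x\in X$ has a unique expansion $x=^{\sigma\sf o}\sum_n a_ne_n$, so setting $e_n^\sharp(x)=a_n$ defines functionals which are linear, because $\sigma$-order convergence is compatible with the vector space operations on $X$, and biorthogonal to $(e_n)$, because $(\delta_{k,n})_k$ is a $\sigma$-order expansion of $e_n$ (its partial sums are eventually constant) and expansions are unique. Thus $(e_n)$ together with $(e_n^\sharp)$ satisfies the hypotheses of Theorem~\ref{super}; property~(2) of that theorem holds, hence so does property~(3), i.e.\ $(e_n)$ is a uniform basis for $X$. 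That $0=^{\sigma\sf o}\sum_n 0\cdot e_n$ is the unique $\sigma$-order expansion of $0$ is then immediate: this expansion holds because the partial sums of $\sum_n 0\cdot e_n$ are all equal to $0$, and the uniqueness clause in the definition of a $\sigma$-order basis says it is the only one.

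For (2)$\Rightarrow$(1), which uses no set-theoretic assumption, I would treat existence and uniqueness of $\sigma$-order expansions in turn. Given $x\in X$, let $x=^{\sf u}\sum_n a_ne_n$ be the uniform expansion supplied by~(2); since uniform convergence of the partial sums entails their $\sigma$-order convergence, $x=^{\sigma\sf o}\sum_n a_ne_n$, which gives existence. If also $x=^{\sigma\sf o}\sum_n b_ne_n$, then subtracting --- legitimate since $\sigma$-order convergence respects differences --- yields $0=^{\sigma\sf o}\sum_n(a_n-b_n)e_n$, and the hypothesis in~(2) forces $a_n-b_n=0$ for every $n$; hence the $\sigma$-order expansion of $x$ is unique. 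So $(e_n)$ is a $\sigma$-order basis for $X$.

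I do not expect a genuine obstacle at the level of this corollary: the real content --- above all the passage from a $\sigma$-order basis to a uniform one --- is carried entirely by Theorem~\ref{super}. The only points that need a moment's attention are that the coordinate functionals of a $\sigma$-order basis are honestly linear, so that Theorem~\ref{super} applies; that uniform convergence of partial sums implies their $\sigma$-order convergence; and that uniqueness of the expansion of an arbitrary vector reduces, by linearity, to uniqueness of the expansion of $0$, which is exactly the extra hypothesis appearing in~(2).
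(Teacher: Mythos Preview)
Your proof is correct. For (1)$\Rightarrow$(2) you follow essentially the paper's argument via Theorem~\ref{super}. For (2)$\Rightarrow$(1), however, you take a more direct route than the paper: you obtain existence of $\sigma$-order expansions immediately from the elementary implication ${\sf u}\Rightarrow\sigma{\sf o}$ for sequential convergence (Figure~\ref{diagram1}), whereas the paper first passes through Theorem~\ref{super} to get a Schauder basis, then invokes condition~(\ref{bibasic ineq}) of Corollary~\ref{cor:uniform} together with \cite[Theorem 3.1]{taylor} to recover $P_mx\conv{\sigma\sf o}{}x$. Your shortcut avoids this detour entirely and, as you observe, makes it transparent that (2)$\Rightarrow$(1) requires no determinacy hypothesis; the paper's route also avoids determinacy (since only (2)$\saa$(3) in Theorem~\ref{super} needs it), but this is less visible. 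The uniqueness argument, reducing to the hypothesis on $0$ by subtraction, is identical in both proofs.
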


Theorem \ref{super} immediately solves several problems listed in the literature regarding the relationships between these basis notions. 
\begin{prob}\label{Question 2.10}\cite[Question 2.10]{taylor}
If $(e_n)$ is simultaneously a Schauder basis and a $\sigma$-order basis for a Banach lattice $X$, do the coefficients in the norm and in the order expansions of the same vector agree? That is, are the two associated sets of biorthogonal functionals equal?
\end{prob}
By the implications (2)$\saa$(3)$\saa$(4), proved under the assumption of analytic determinacy, the answer to Problem \ref{Question 2.10} is therefore positive. 

Recall from \cite{taylor} that a \emph{bibasis} is, by definition, a sequence $(e_n)$ in a Banach lattice $X$ which is simultaneously a Schauder and a uniform basis for $X$. 
\begin{prob}\label{Question 9.2}\cite[Question 9.2]{taylor}
Suppose $(e_n)$ is a bibasis with unique $\sigma$-order expansions. Does $(e_n)$ have unique order expansions?
\end{prob}
By the equivalence of statements (1) and (2) in Theorem~\ref{super}, the answer to Problem~\ref{Question 9.2} is positive.

For our next applications, we need to recall some facts about order continuous Banach lattices from \cite{MR0569521}.
\begin{lemme}
Suppose $X$ is an order continuous Banach lattice. Then, for all sequences $(x_n)$ and vectors $x$,
$$
x_n\maps{\sf o}x\quad\equi\quad x_n\maps{\sigma{\sf  o}}x\quad\equi\quad x_n\maps{\sf u}x.
$$
Similarly, if $X$ is $\sigma$-order continuous, then
$$
x_n\maps{\sigma{\sf  o}}x\quad\equi\quad x_n\maps{\sf u}x.
$$
\end{lemme}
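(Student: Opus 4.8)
The plan is to reduce the statement to its only genuinely non-obvious content. In any Banach lattice one has ${\sf u}$-convergence $\saa$ $\sigma{\sf o}$-convergence $\saa$ ${\sf o}$-convergence (recalled in the discussion preceding Definition~\ref{C-basis}; see Figure~\ref{diagram1}), so the task is to prove the two reverse implications: that ${\sf o}$-convergence implies ${\sf u}$-convergence when $X$ is order continuous, and that $\sigma{\sf o}$-convergence implies ${\sf u}$-convergence when $X$ is merely $\sigma$-order continuous. Granting these, the first displayed equivalence closes up via order $\saa$ uniform $\saa$ $\sigma$-order $\saa$ order, and the second via $\sigma$-order $\saa$ uniform $\saa$ $\sigma$-order. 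Translating, I may assume throughout that the limit is $x=0$.

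For the order continuous case I would argue as follows. Suppose $x_n\maps{\sf o}0$, witnessed by a net $z_\mu\downarrow 0$ with $\a\mu\;\a^\infty n\;|x_n|\leqslant z_\mu$. Order continuity gives $\norm{z_\mu}\to 0$, so for each $k\in\N$ I may choose an index $\mu_k$ with $\norm{z_{\mu_k}}<4^{-k}$, and then, from the definition of ${\sf o}$-convergence, an $N_k$ with $|x_n|\leqslant z_{\mu_k}$ for all $n\geqslant N_k$. The crucial step is to assemble a single dominating vector: since $\sum_{k}2^k\norm{z_{\mu_k}}\leqslant\sum_k 2^{-k}<\infty$, the series $z:=\sum_{k=1}^\infty 2^kz_{\mu_k}$ converges in norm to an element $z\in X_+$, and because the partial sums eventually dominate $2^kz_{\mu_k}$ and the positive cone of $X$ is norm-closed, one gets $z_{\mu_k}\leqslant 2^{-k}z$ for every $k$. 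Hence $|x_n|\leqslant 2^{-k}z$ whenever $n\geqslant N_k$; given $m\in\N$, choosing $k$ with $2^k\geqslant m$ yields $|x_n|\leqslant\tfrac{1}{m}z$ for all $n\geqslant N_k$. Thus $\a m\;\a^\infty n\;|x_n|\leqslant\tfrac{1}{m}z$, which is precisely $x_n\maps{\sf u}0$.

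The $\sigma$-order continuous case will be the same argument with ``sequence'' replacing ``net'': $\sigma$-order continuity turns a witness $z_m\downarrow 0$ into a sequence with $\norm{z_m}\to 0$, and the identical construction produces $z\in X_+$ with $\a m\;\a^\infty n\;|x_n|\leqslant\tfrac{1}{m}z$. I expect the only real obstacle to be spotting the right construction of the dominating vector $z$ --- a geometrically weighted sum of a subfamily of the witnessing net or sequence --- together with the two small facts that make it work: that order (resp.\ $\sigma$-order) continuity forces the relevant norms to be summable after passage to a subsequence, and that closedness of the positive cone lets the norm limit dominate each summand. Everything else is bookkeeping with the quantifier $\a^\infty n$; in particular no Dedekind completeness of $X$ is required, since at no point is the supremum of an infinite set formed.
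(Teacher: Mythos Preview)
Your argument is correct. The paper, however, does not supply its own proof of this lemma: it is stated as a fact ``recall[ed] \ldots\ from \cite{MR0569521}'' and used as a black box. So there is no in-paper proof to compare against.

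That said, it is worth noting that your key construction --- passing to a subfamily $(z_{\mu_k})$ with $\norm{z_{\mu_k}}<4^{-k}$ and forming the norm-convergent sum $z=\sum_k 2^k z_{\mu_k}$ to obtain a single regulator --- is exactly the trick the paper itself deploys in the proof of Lemma~\ref{lem:uniform} (the ``only if'' direction). So while the paper outsources the lemma to the literature, your proof is in complete harmony with the paper's own toolkit and could be inserted without introducing any new ideas.
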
 
Thus, in an order continuous Banach lattice $X$, the notions of order, $\sigma$-order and uniform bases coincide and they will have the same associated biorthogonal functionals. Similarly, if $X$ is $\sigma$-order continuous, the notions of $\sigma$-order and uniform bases coincide and have the same biorthogonal functionals. In particular, these types of bases will automatically also be Schauder bases with the same biorthogonal functionals. 

\begin{cor}
Let $(e_n)$ be a $\sigma$-order basis for a $\sigma$-order continuous Banach lattice. Then $(e_n)$ is also a Schauder basis with the same biorthogonal functionals.
\end{cor}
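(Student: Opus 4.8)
The plan is to reduce everything to Theorem~\ref{super} by way of the lemma on $\sigma$-order continuous lattices stated just above. First, since $X$ is $\sigma$-order continuous, that lemma gives that $\sigma$-order convergence and uniform convergence of sequences coincide in $X$; in particular, for a fixed $x$ and a fixed scalar sequence $(a_n)$, the relations $x=^{\sigma\sf o}\sum_{n=1}^\infty a_ne_n$ and $x=^{\sf u}\sum_{n=1}^\infty a_ne_n$ are equivalent. Consequently, a $\sigma$-order basis for $X$ is automatically a uniform basis for $X$, and, crucially, the uniquely determined coefficient sequences — hence the associated biorthogonal functionals $(e_n^\sharp)$ — are literally the same in both cases.

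Next I would verify that the standing hypothesis $X=[e_n]$ of Theorem~\ref{super} is actually satisfied here (it must be derived, not assumed). This is the only point needing a short argument: if $x=^{\sf u}\sum_{n=1}^\infty a_ne_n$, then by definition there is $z\in X_+$ such that, for each $k$, $|x-\sum_{n=1}^m a_ne_n|\leqslant \tfrac zk$ holds for all but finitely many $m$; since the norm of $X$ is a lattice norm, this forces $\norm{x-\sum_{n=1}^m a_ne_n}\to 0$, so $x\in[e_n]$. As this holds for every $x\in X$, we conclude $X=[e_n]$.

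With $X=[e_n]$ in hand, Theorem~\ref{super} applies directly: $(e_n)$ satisfies its condition (3) (it is a uniform basis for $X$ with corresponding functionals $(e_n^\sharp)$), and the implication (3)$\Rightarrow$(4) — which, as noted in the discussion following the theorem, requires no additional set-theoretic axioms — shows that $(e_n)$ is a Schauder basis for $X$ with the same biorthogonal functionals, which are therefore continuous.

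There is no serious obstacle; the only things to be careful about are that Theorem~\ref{super} carries the hypothesis $X=[e_n]$, so one should not invoke it before establishing that equality, and that one uses only the ZFC implication (3)$\Rightarrow$(4), which is why this corollary — unlike Corollary~\ref{cor:order} — needs no appeal to analytic determinacy.
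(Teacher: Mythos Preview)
Your proof is correct and follows the same route as the paper: the lemma on $\sigma$-order continuous lattices identifies $\sigma$-order bases with uniform bases (with the same biorthogonal functionals), and then the ZFC implication (3)$\Rightarrow$(4) of Theorem~\ref{super} (equivalently, Theorem~\ref{master1}) finishes the job. Your explicit verification that $X=[e_n]$ is a welcome bit of care; the paper leaves this step implicit, relying on the earlier observation that uniform convergence implies norm convergence.
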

This in turn provides a positive answer to the following questions.

\begin{prob}\cite[Problem 1.3]{gumenchuk} Let $(x_n)$ be a $\sigma$-order basis of an order continuous Banach lattice E. Is then $(x_n)$ a Schauder basis of $E$? What about $E = L_p$ with $1\leqslant p<\infty$?
\end{prob}

In the same paper, the authors consider the specific example of $L_1$ and conjecture a negative answer to the following question.
\begin{prob}\label{Problem 5.2}\cite[Problem 5.2]{gumenchuk} 
Does $L_1$ have a $\sigma$-order basis?
\end{prob}
Regarding this, they show that $L_1$ does not admit a sequence $(e_n)$ that is simultaneously a Schauder and a $\sigma$-order basis for $L_1$ \cite[Theorem 5.1]{gumenchuk}. However, given that $L_1$ is order continuous, every $\sigma$-order basis for $L_1$ is also a Schauder basis, which gives a negative answer to Problem \ref{Problem 5.2}.
\begin{cor}
The Banach lattice $L_1$ admits no $\sigma$-order basis.
\end{cor}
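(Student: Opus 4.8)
The plan is to derive a contradiction with \cite[Theorem 5.1]{gumenchuk}, which asserts that $L_1$ admits no sequence that is simultaneously a Schauder basis and an order basis for $L_1$. So suppose, towards a contradiction, that $(e_n)$ is a $\sigma$-order basis for $L_1$, with associated coordinate functionals $(e_n^\sharp)$, and recall that $L_1(\mu)$ is an order continuous (in particular $\sigma$-order continuous) Banach lattice.

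First I would feed this into the Lemma on order continuous lattices recalled just above: in $L_1$ the three relations $x_n\maps{\sf o}x$, $x_n\maps{\sigma{\sf o}}x$ and $x_n\maps{\sf u}x$ all coincide. Applying this to the partial sums $P_m x=\sum_{n=1}^m e_n^\sharp(x)e_n$, the hypothesis that every $x\in L_1$ has a unique scalar expansion $x=^{\sigma\sf o}\sum_n a_n e_n$ translates verbatim into the same statement for $=^{\sf u}$ and for $=^{\sf o}$ — for a fixed $x$, the set of scalar sequences $(a_n)$ witnessing $\sigma$-order convergence of $(P_m x)$ to $x$ is literally the same set as the one witnessing uniform, resp. order, convergence. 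Hence $(e_n)$ is at the same time a uniform basis and an order basis for $L_1$, with $(e_n^\sharp)$ the coordinate functionals in each case.

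Next I would upgrade ``uniform basis'' to ``Schauder basis''. Since uniform convergence implies norm convergence, the uniform expansion $x=^{\sf u}\sum_n e_n^\sharp(x)e_n$ is in particular norm convergent, so every $x\in L_1$ lies in $[e_n]$; that is, $L_1=[e_n]$. This is the one step that genuinely requires a remark, because a priori the closed span of a $\sigma$-order basis can be a proper sublattice — witness the unit vectors being an order basis for both $c_0$ and $\ell_\infty$. Once $L_1=[e_n]$ is in hand we are in the setting of Theorem~\ref{super}, and its determinacy-free implication (3)$\Rightarrow$(4) (equivalently, the corollary immediately preceding this one) shows that the uniform basis $(e_n)$ is a Schauder basis for $L_1$ with biorthogonal functionals $(e_n^\sharp)$.

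Combining the previous two paragraphs, $(e_n)$ is simultaneously a Schauder basis and an order basis for $L_1$, contradicting \cite[Theorem 5.1]{gumenchuk}; hence $L_1$ has no $\sigma$-order basis. The argument stays within ZFC, as it only uses the Lemma on order continuous lattices together with the easy implication (3)$\Rightarrow$(4) of Theorem~\ref{super}. There is no real obstacle here beyond the bookkeeping: the substantive work has already been done in \cite{gumenchuk} and in the proof of Theorem~\ref{super}, and the only point worth flagging is the observation that a $\sigma$-order (hence, in $L_1$, uniform) basis must norm-span the whole space, which is what allows \cite[Theorem 5.1]{gumenchuk} to apply.
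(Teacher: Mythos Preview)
Your argument is correct and follows the same route as the paper: use order continuity of $L_1$ to identify $\sigma$-order, order and uniform bases, observe that a uniform basis forces $L_1=[e_n]$, invoke the determinacy-free implication (3)$\Rightarrow$(4) of Theorem~\ref{super} to get a Schauder basis, and then contradict \cite[Theorem 5.1]{gumenchuk}. The only difference is that you make the step $L_1=[e_n]$ explicit, whereas the paper absorbs it into the preceding corollary.
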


A few cautionary  remarks on the terminology are in order. Namely, our notion of $\sigma$-order convergence for sequences is simply called {\em order convergence} for sequences in \cite{gumenchuk} and therefore our notion of $\sigma$-order basis is similarly designated {\em order basis} in \cite{gumenchuk}. In the same paper, a sequence $(e_n)$ in a Banach lattice $X$ which is simultaneously a Schauder basis and a $\sigma$-order basis for $X$ is denoted a {\em bibasis}. By Theorem \ref{super}, under analytic determinacy, bibases are thus simply $\sigma$-order bases such that $X=[e_n]$. However, in \cite{gumenchuk}, the authors exclusively work in $\sigma$-order continuous Banach lattices, where of course the notions of $\sigma$-order and uniform bases coincide. On the other hand, in \cite{taylor}, a sequence $(e_n)$ in a Banach lattice $X$ is called a {\em bibasis} for $X$ provided that it is both a Schauder basis and a uniform basis. Because of Theorem \ref{super}, these two competing notions of bibases are superfluous as they just correspond to $\sigma$-order and uniform bases respectively. To avoid any confusion, we shall exclusively employ the terminology of Definition \ref{C-basis} and eschew the, in hindsight, unnecessary notion of bibasis.

It is of course natural to ask whether the use of analytic determinacy in Theorem \ref{super} is really necessary, i.e., if there is not some other more insightful proof bypassing these issues. We do not know the answer to this question, but note that the problem resides in the fact that the very notion of $\sigma$-order convergence is a priori of too high descriptive complexity. Indeed, in Proposition~\ref{prop conv agree}, we are only able to show that, for a general separable Banach lattice $X$, the set
\maths{
\Big\{ \big((x_n),x\big)\in X^\N\times X\Del x_n\conv{\sigma\sf o}{n}x\Big\}
}
is  ${\bf \Delta}^1_2$. On the other hand, if we can show this set to be ${\bf \Sigma}^1_1$ (i.e., analytic) instead, then our proofs no longer necessitate the additional set-theoretical assumptions.  This happens, for example, if the Banach lattice $X$ admits a countable {\em $\pi$-basis}, i.e., a countable set $P\subseteq X$ of positive elements so that, for every $x>0$, there is some $p\in P$ with $0<p\leqslant x$. We will provide elsewhere a hierarchy, indexed by countable ordinal numbers, which can be used to characterize when the above set is analytic.

%%%%%%%%%%%%%%%%%%%%%%%%%%%%%%%%%%%%%%%%%%%%%
%%%%%%%%%%%%%%%%%%%%%%%%%%%%%%%%%%%%%%%%%%%%%
%%%%%%%%%%%%%%%%%%%%%%%%%%%%%%%%%%%%%%%%%%%%%
%%%%%%%%%%%%%%%%%%%%%%%%%%%%%%%%%%%%%%%%%%%%%
\subsection{Filter bases}
The second topic of our study concerns a generalisation of Schauder bases in the context of general Banach spaces, not lattices.  Assume that $\mathcal{F}$ is a filter of subsets of $\N$, that is, $\mathcal{F}\subseteq \ku P(\N)$ is closed under taking intersections and supersets,
\begin{itemize}
\item $a,b\in \mathcal{F}\;\saa\; a\cap b\in \mathcal{F}$,
\item $a\subseteq b \;\&\; a\in \mathcal{F}\;\saa\; b\in \mathcal{F}$.
\end{itemize}
For reasons that will become apparent later, we shall also assume that all filters are {\em proper}, i.e.,  $\tom \notin \mathcal{F}$, and contain the {\em Fréchet filter} consisting of all cofinite subsets of $\N$. Recall that a sequence $(x_n)$ is said to {\em converge along $\mathcal{F}$} to $x$, denoted $x_n\conv{}{n\to \mathcal{F}}x$, if
$$
\big\{n\in \N\del\, \norm{x_n-x}<\eps\big\}\in \mathcal{F}
$$
for all $\eps>0$.
In complete analogy with Definition \ref{C-basis}, we have the following definition due to M. Ganichev and V. Kadets \cite{ganichev}.
\begin{defi}
A sequence $(e_n)$ in a Banach space $X$ is said to be an {\em $\mathcal{F}$-basis for $X$} provided that, for all $x\in X$, there is a unique sequence $(a_n)\in \R^\N$ so that $\sum_{n=1}^ma_ne_n\conv{}{m\to \mathcal{F}}x$, which we denote by
$$
x=^\mathcal{F}\sum_{n=1}^\infty a_ne_n.
$$
More generally, $(e_n)$ is said to be a {\em filter basis for $X$} if it is an $\mathcal{F}$-basis for $X$ for some filter $\mathcal{F}$, in which case $\mathcal{F}$ is said to be {\em compatible} with $(e_n)$.
\end{defi}
Let us note that, if $\mathcal{F}$ is just the Fréchet filter itself, then an $\mathcal{F}$-basis $(e_n)$ is nothing but a  Schauder basis.
Although \cite[Example 1]{kadets} provides a basis for $\ell_2$ with respect to the filter of sets of density $1$, which however is not a Schauder basis, T. Kania asked whether there is an example of a Banach space without a Schauder basis that nevertheless has an $\mathcal{F}$-basis for some appropriate filter $\mathcal{F}$. We answer this by the following simple example.
\begin{exa}[A Banach space with a filter basis, but no Schauder basis]
Let $X$ be a Banach space with a finite-dimensional decomposition $(X_n)_{n=1}^\infty$, but without a  Schauder basis. That such spaces exist follows for example from \cite[Theorem 1.1]{szarek}. Choose now sequences $(e_i)_{i\in \N}$ and $0=k_0<k_1<k_2<\ldots$ so that
$$
\big\{e_i \del   k_{n-1}< i \leqslant  {k_n}\big\}
$$
is a basis for $X_n$ for all $n$. Let also 
$$
\mathcal{F}=\big\{ a\subseteq \N \del k_n\in a \text{ for all but finitely many }n\big\}.
$$
Since $(X_n)$ is an F.D.D. for $X$, we have that, for every $x\in X$, there are unique vectors $x_n\in X_n$ so that $x=\sum_{n=1}^\infty x_n$. Writing $x_n=\sum_{i=k_{n-1}+1}^{k_n}a_ie_i$ for appropriate scalars $a_i\in \R$, we see that, for all $\eps>0$, 
$$
k_n\in \Big\{m\in \N\Del\, \NORM{x-\sum_{i=1}^ma_ie_i}<\eps\Big\}
$$
for all but finitely many $n$ and hence that the latter set belongs to $\mathcal{F}$. Furthermore, by the uniqueness of the $x_n$, $(a_i)$ is the only such sequence, which shows that $(e_i)$ is an $\mathcal{F}$-basis for $X$.
\end{exa}

Note that, if $\mathcal{F}$ is a filter and $(e_n)$ is an $\mathcal{F}$-basis for $X$, we may define the associated biorthogonal functionals $e^\sharp_n$ just as for order bases etc. However, it might be possible that $(e_n)$ is simultaneously an $\mathcal{F}'$-basis for $X$ with respect to some other filter $\mathcal{F}'$, in which case it is unclear whether the biorthogonal functionals associated with $\mathcal{F}'$ are the same as those associated with $\mathcal{F}$. Furthermore, even under additional set-theoretical axioms, it is no longer clear whether any of these functionals are continuous. To discuss these issues, we must introduce a more refined concept. 
\begin{defi}
Let $(e_n,e^\sharp_n)$ be a biorthogonal system in a Banach space $X$, i.e., $(e_n)$ is a sequence of vectors in $X$  and $e^\sharp_n\colon X\to \R$ are (possibly discontinuous) functionals biorthogonal to the $e_n$. We say that $(e_n,e^\sharp_n)$  is  a {\em filter basis system} for  $X$ provided that there is a filter $\mathcal{F}$ so that $(e_n)$ is an $\mathcal{F}$-basis for $X$ with associated biorthogonal functionals $e^\sharp_n$. Such a filter $\mathcal{F}$ is said to be {\em compatible} with $(e_n,e^\sharp_n)$.
\end{defi}
Let us note that, to every filter basis system $(e_n,e^\sharp_n)$, there is a smallest compatible filter $\mathcal{F}$, which we shall return to later on. Recall also that a sequence $(e_n)$ in a Banach space is said to be {\em minimal} if, for all $k$, we have that $e_k\notin [e_n]_{n\neq k}$. 
\begin{lemme}\label{minimal-cont}
Let $(e_n,e^\sharp_n)$ be a filter basis system for $X$. Then the sequence $(e_n)$ is minimal if and only if the functionals $e^\sharp_n$ are continuous.
\end{lemme}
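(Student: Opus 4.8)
The plan is to prove the two implications separately; the direction from continuity to minimality is immediate, and the content lies in the converse.

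\textbf{Continuity implies minimality.} Suppose each $e^\sharp_k$ is continuous. Then $\ker e^\sharp_k$ is a norm-closed subspace of $X$ which, by biorthogonality, contains $e_n$ for every $n\neq k$, yet does not contain $e_k$ since $e^\sharp_k(e_k)=1$. Hence $e_k\notin [e_n]_{n\neq k}\subseteq \ker e^\sharp_k$, so $(e_n)$ is minimal. Note that the filter $F$ plays no role here.

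\textbf{Minimality implies continuity.} Suppose $(e_n)$ is minimal and let $F$ be a filter compatible with $(e_n,e^\sharp_n)$. Fix $k$. Since $e_k\notin [e_n]_{n\neq k}$ and the latter is norm-closed, the Hahn--Banach theorem produces a \emph{continuous} functional $f_k\in X^*$ vanishing on $[e_n]_{n\neq k}$ with $f_k(e_k)=1$; thus $f_k(e_n)=\delta_{k,n}$. It now suffices to show $f_k=e^\sharp_k$. To that end, fix $x\in X$ and write $x=^F\sum_{n} a_n e_n$, so that $a_n=e^\sharp_n(x)$ and the partial sums $s_m=\sum_{n=1}^m a_n e_n$ satisfy $s_m\conv{}{m\to F}x$. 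Since $f_k$ is continuous, $f_k(s_m)\conv{}{m\to F}f_k(x)$. But $f_k(s_m)=a_k$ for every $m\geqslant k$, so the scalar sequence $\big(f_k(s_m)\big)_m$ is eventually constant equal to $a_k$; as $F$ contains the Fr\'echet filter, it converges along $F$ to $a_k$. Since $F$ is proper, $F$-limits in $\R$ are unique, whence $f_k(x)=a_k=e^\sharp_k(x)$. As $x$ was arbitrary, $e^\sharp_k=f_k$ is continuous.

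I expect no genuine obstacle here; the argument is routine once one isolates the mechanism forcing $f_k=e^\sharp_k$, namely that an eventually constant scalar sequence converges along $F$ (this uses $F\supseteq$ the Fr\'echet filter) to a unique limit (this uses properness of $F$). These are exactly the two standing assumptions imposed on filters earlier, and they are precisely what prevents the continuous functional $f_k$ from differing from the \emph{a priori} possibly discontinuous $e^\sharp_k$. The only point demanding a moment's care is therefore this identification of the two biorthogonal systems.
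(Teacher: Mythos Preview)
Your proof is correct and follows essentially the same approach as the paper (which proves this as the implication (2)$\saa$(1) inside Theorem~\ref{thm:filter}): use Hahn--Banach to obtain continuous biorthogonal functionals $f_k$, then identify $f_k$ with $e^\sharp_k$ via the $F$-convergence of partial sums. The only cosmetic difference is that the paper first extracts an ordinary norm-convergent subsequence of partial sums (using that every member of $F$ is infinite) and then applies continuity of $f_k$, whereas you push $f_k$ through the $F$-limit directly and invoke uniqueness of $F$-limits; both routes are equally valid.
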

Thus, the continuity of the associated biorthogonal functionals can be detected directly on the filter basis $(e_n)$ itself without even involving the functionals.

Although Ganichev and Kadets \cite{ganichev} operate with a slightly more general notion of filter basis, the following problem remains open even in our setting.
\begin{prob}\label{ganichev}\cite{ganichev}
Suppose $(e_n)$  is a filter basis for a Banach space $X$. Is $(e_n)$ necessarily a minimal sequence?
\end{prob}

The papers \cites{kania, Rancourt} address Problem \ref{ganichev} and show that a  filter basis $(e_n)$ is minimal if and only if it admits a compatible filter $\mathcal{F}$ that is analytic when viewed as a subset of $\ku P(\N)=\{0,1\}^\N$ \cite[Theorem A, Theorem B]{Rancourt}.
In connection with this, \cite[Question 2]{Rancourt} asks whether it is possible to improve this so as to get the filter $\mathcal{F}$ to be Borel.  We resolve this even while  keeping the associated biorthogonal functionals $e^\sharp_n$ fixed.

\begin{thm}\label{thm:filter}
Let $(e_n,e^\sharp_n)$ be a filter basis system for $X$. Then the following are equivalent.
\begin{enumerate}
\item The functionals $e^\sharp_n$ are continuous,
\item the sequence $(e_n)$ is minimal, 
\item the smallest compatible filter is analytic,
\item there is a compatible analytic filter,
\item there is a compatible Borel filter.
\end{enumerate}
\end{thm}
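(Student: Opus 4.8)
The plan is to establish the cycle $(1)\saa(3)\saa(4)\saa(2)\saa(1)$, together with the trivial $(5)\saa(4)$, and then $(3)\saa(5)$. Several links are already available: $(1)\equi(2)$ is Lemma~\ref{minimal-cont}; $(4)\saa(2)$ is \cite[Theorem A, Theorem B]{Rancourt} (a filter compatible with $(e_n,e^\sharp_n)$ is in particular compatible with $(e_n)$, so if it is analytic then $(e_n)$ is minimal); $(3)\saa(4)$ is immediate, the smallest compatible filter being a compatible analytic filter; and $(5)\saa(4)$ is immediate. So the two real points are $(1)\saa(3)$ — computing the complexity of the smallest compatible filter — and, once the equivalence of $(1)$--$(4)$ is in place, the existence of a compatible \emph{Borel} filter.

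For $(1)\saa(3)$: note first that $X=[e_n]=\overline{\spa(e_n)}$ is separable, since each partial sum $P_mx=\sum_{n\leqslant m}e^\sharp_n(x)e_n$ lies in $\spa(e_n)$ and the limit of a sequence along a filter belongs to the norm-closure of its range. For $x\in X$ and $\eps>0$ put
$$A_{x,\eps}=\big\{m\in\N\ :\ \norm{x-P_mx}<\eps\big\}.$$
A filter is compatible with $(e_n,e^\sharp_n)$ exactly when it contains every $A_{x,\eps}$: containing all $A_{x,\eps}$ is precisely the existence of $F$-expansions with the prescribed coefficients, and uniqueness is automatic once the $e^\sharp_n$ are continuous (apply the continuous functional $e^\sharp_k$ to a hypothetical $F$-expansion of $0$). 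Hence the filter $F_0$ generated by $\ku A:=\{A_{x,\eps}:x\in X,\ \eps>0\}$ is the smallest compatible filter (proper, because $(e_n,e^\sharp_n)$ has a compatible filter at all, which then contains $F_0$; and it contains the Fréchet filter since $A_{0,\eps}=\N\in\ku A$). Now continuity of the $e^\sharp_n$ makes each $P_m$ norm-continuous, so each set $\{(x,\eps):m\in A_{x,\eps}\}$ is open, the assignment $(x,\eps)\mapsto A_{x,\eps}$ is Borel, and $\ku A$ is an analytic subset of $\ku P(\N)$, being the Borel image of the Polish space $X\times(0,\infty)$. Since
$$F_0=\big\{a\ :\ \exists\,l\ \exists\,(b_1,\dots,b_l)\in\ku A^{\,l}\ \exists\text{ finite }s\subseteq\N\ \ \big((b_1\cap\dots\cap b_l)\setminus s\subseteq a\big)\big\},$$
whose only unbounded quantifier is the projection along the analytic set $\ku A^{\,l}$, the filter $F_0$ is analytic; this is $(3)$. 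Running $(3)\saa(4)\saa(2)\saa(1)$ gives the equivalence of $(1)$--$(4)$, so henceforth the $e^\sharp_n$ may be taken continuous and a filter is compatible if and only if it is a (proper) filter containing $F_0$.

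There remains $(3)\saa(5)$, i.e.\ the construction of a proper Borel filter $G\supseteq F_0$, which is \textbf{the main obstacle}. This cannot follow from analyticity of $F_0$ alone; the particular structure of $F_0$ must be used, namely that it is generated by the \emph{approximately linear} family $\ku A$ (one has $A_{cx,|c|\eps}=A_{x,\eps}$ and $A_{x,\eps}\cap A_{y,\delta}\subseteq A_{x+y,\eps+\delta}$), together with separability of $X$ and continuity of the $e^\sharp_n$. A clean subcase is that in which the partial-sum projections are \emph{bounded along the filter}, i.e.\ there are $C<\infty$ and $A\in F_0$ with $\sup_{m\in A}\norm{P_m}\leqslant C$: fixing a countable dense sequence $(d_k)$ in $X$, the countable family $\{A\cap A_{d_k,1/j}:k,j\in\N\}$ has the strong finite intersection property, hence a pseudo-intersection $M\subseteq\N$, and uniform boundedness of the $P_m$ on $M$ combined with density of $(d_k)$ forces $P_mx\to x$ along $M$ for every $x\in X$; then $G=\{b\subseteq\N:M\subseteq^{*}b\}$ is a compatible $F_\sigma$ filter.

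The genuine difficulty is the opposite subcase, where the projections are unbounded along $F_0$, so no single subsequence suffices and $G$ has to be constructed globally. My proposed route is to exploit Talagrand's description of proper (hence meager) analytic filters by an interval partition $\N=\bigsqcup_k I_k$ with every member of $F_0$ meeting all but finitely many $I_k$, and then, using the approximate linearity of $\ku A$ and continuity of the $e^\sharp_n$, to select Borel-definable ``admissible'' indices inside the intervals whose associated filter still contains $F_0$ but stays proper. Carrying out this global selection — ensuring the resulting Borel set is in fact a proper filter — is where I expect all the real work to be.
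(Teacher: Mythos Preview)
Your handling of the implications other than $(3)\saa(5)$ is fine and essentially matches the paper (the paper gives an explicit proof of $(2)\saa(1)$ rather than citing Lemma~\ref{minimal-cont}, but the content is the same). The real issue is exactly where you flagged it: your argument for $(3)\saa(5)$ is incomplete, and the route you sketch is not the one that works.

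The ``bounded along the filter'' subcase is correct as far as it goes, but the general case cannot be finished by the Talagrand interval-selection idea you describe. The problem is structural: you are trying to produce, by an explicit construction tailored to $\ku A$, a Borel proper filter containing a given analytic proper filter. Nothing in the approximate linearity of $\ku A$ or in the interval partition lets you \emph{globally} control which indices to select so that the result is simultaneously (i) a filter, (ii) proper, and (iii) contains every $A_{x,\eps}$; these constraints interact across all $x$ at once, and there is no obvious Borel selector. The paper bypasses this entirely by a single appeal to the Second Reflection Theorem \cite[Theorem 35.16]{Kechris}. One encodes ``$F$ is a compatible filter'' as a predicate $\Gamma(F,\compl F)$ where
\[
\Gamma(B,C)\;\equi\;\Phi(B)\;\&\;\Psi(B,C),
\]
with $\Psi(B,C)$ expressing closure under supersets and intersections, containment of cofinite sets, and $\tom\notin B$. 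One then checks that $\Gamma$ is $\Pi^1_1$ on $\Sigma^1_1$, hereditary in both variables, and continuous upwards in the second variable. Since $\Gamma(A,\compl A)$ holds for the smallest compatible (analytic) filter $A$, reflection immediately yields a Borel $F\supseteq A$ with $\Gamma(F,\compl F)$, i.e.\ a compatible Borel filter. This is the missing idea; no case split or explicit selection is needed.
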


Note that examples of analytic non-Borel filters are well-known (see for example \cite[Proposition 6.3]{Uzcategui}).

%%%%%%%%%%%%%%%%%%%%%%%%%%%%%%%%%%%%%%%%%%%%%%%%%%%%%%%%

%%%%%%%%%%%%%%%%%%%%%%%%%%%%%%%%%%%%%%%%%%%%%%%%%%%%%%%%%%%
%%%%%%%%%%%%%%%%%%%%%%%%%%%%%%%%%%%%%%%%%%%%%%%%%%%%%%%%%%%
%%%%%%%%%%%%%%%%%%%%%%%%%%%%%%%%%%%%%%%%%%%%%%%%%%%%%%%%%%%
%%%%%%%%%%%%%%%%%%%%%%%%%%%%%%%%%%%%%%%%%%%%%%%%%%%%%%%%%%%
%%%%%%%%%%%%%%%%%%%%%%%%%%%%%%%%%%%%%%%%%%%%%%%%%%%%%%%%%%%
%%%%%%%%%%%%%%%%%%%%%%%%%%%%%%%%%%%%%%%%%%%%%%%%%%%%%%%%%%%
%%%%%%%%%%%%%%%%%%%%%%%%%%%%%%%%%%%%%%%%%%%%%%%%%%%%%%%%%%%
%%%%%%%%%%%%%%%%%%%%%%%%%%%%%%%%%%%%%%%%%%%%%%%%%%%%%%%%%%%
%%%%%%%%%%%%%%%%%%%%%%%%%%%%%%%%%%%%%%%%%%%%%%%%%%%%%%%%%%%
%%%%%%%%%%%%%%%%%%%%%%%%%%%%%%%%%%%%%%%%%%%%%%%%%%%%%%%%%%%
%%%%%%%%%%%%%%%%%%%%%%%%%%%%%%%%%%%%%%%%%%%%%%%%%%%%%%%%%%%
%%%%%%%%%%%%%%%%%%%%%%%%%%%%%%%%%%%%%%%%%%%%%%%%%%%%%%%%%%%
%%%%%%%%%%%%%%%%%%%%%%%%%%%%%%%%%%%%%%%%%%%%%%%%%%%%%%%%%%%
%%%%%%%%%%%%%%%%%%%%%%%%%%%%%%%%%%%%%%%%%%%%%%%%%%%%%%%%%%%
%%%%%%%%%%%%%%%%%%%%%%%%%%%%%%%%%%%%%%%%%%%%%%%%%%%%%%%%%%%
%%%%%%%%%%%%%%%%%%%%%%%%%%%%%%%%%%%%%%%%%%%%%%%%%%%%%%%%%%%
%%%%%%%%%%%%%%%%%%%%%%%%%%%%%%%%%%%%%%%%%%%%%%%%%%%%%%%%%%%
%%%%%%%%%%%%%%%%%%%%%%%%%%%%%%%%%%%%%%%%%%%%%%%%%%%%%%%%%%%
%%%%%%%%%%%%%%%%%%%%%%%%%%%%%%%%%%%%%%%%%%%%%%%%%%%%%%%%%%%

\section{Proofs for order bases}\label{Section o}
As is well-known from the case of Banach spaces, it is often useful to operate with basic sequences as opposed to bases. So let us introduce this notion in our context. Recall first that a sequence $(e_n)$ in a Banach space $X$ is said to be {\em Schauder basic} in case it is a Schauder basis for its closed linear span $[e_n]$. When dealing with  uniform, $\sigma$-order and order convergence, extra caution is required since the closed linear span $[e_n]$ of a sequence in $X$  need not be a sublattice. Furthermore, even the notions of $\sigma$-order and order convergence are not absolute, but depend on the ambient lattice (for example the unit vector basis $(e_k)$ is not order convergent in $c_0$, but it is order null when viewed in $\ell_\infty$). On the other hand, uniform convergence is absolute \cite[Proposition 2.12]{taylor}.  In fact, as shown in Lemma \ref{lem:uniform} below, uniform convergence can be equivalently reformulated so as to avoid any reference to the ambient lattice.  

\begin{lemme}\label{lem:uniform}
For a sequence $(x_n)$ and vector $x$ in a Banach lattice $X$, we have that
$x_n\conv{\sf u}{n}x$ if and only if
$$
\a \eps>0\; \e k\; \a m\geqslant k\quad \BNORM{\bigvee_{n=k}^m|x_n-x|}<\eps.
$$
In particular, the set
$$
\Big\{ \big((x_n)_{n=1}^\infty,x\big)\in X^\N\times X\Del x_n\conv{\sf u}{n}x\Big\}
$$ 
is Borel.
\end{lemme}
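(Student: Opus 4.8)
The plan is to prove the two directions of the equivalence directly from the definition of uniform convergence, and then deduce the Borel-ness of the convergence set from the reformulated condition together with separability of $X$.

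First I would prove the forward implication. Suppose $x_n \conv{\sf u}{n} x$, so there is $z \in X_+$ with the property that for each $m$ there is $N_m$ such that $|x_n - x| \leqslant \tfrac{z}{m}$ for all $n \geqslant N_m$. Fix $\eps > 0$. Pick $m$ with $\tfrac{\norm{z}}{m} < \eps$ and set $k = N_m$. Then for every $j$ with $k \leqslant j$ we have $|x_j - x| \leqslant \tfrac{z}{m}$, hence for any $m' \geqslant k$ the finite supremum satisfies $\bigvee_{n=k}^{m'} |x_n - x| \leqslant \tfrac{z}{m}$, and taking norms (using that the norm is monotone on the positive cone) gives $\Norm{\bigvee_{n=k}^{m'} |x_n - x|} \leqslant \tfrac{\norm z}{m} < \eps$. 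This is exactly the stated condition. For the converse, assume the $\eps$-$k$-$m$ condition holds. For each $j \in \N$ choose $k_j$ witnessing it for $\eps = 2^{-j}$; without loss of generality the $k_j$ are increasing. For each $j$ set $w_j = \sup_{m \geqslant k_j} \bigvee_{n=k_j}^{m} |x_n - x|$, which exists in $X$ because... wait — here lies the subtlety: the supremum of an increasing net need not exist in an arbitrary Banach lattice. The correct move is instead to work inside a suitable order-complete ambient object or, better, to build the dominating element $z$ as a norm-convergent series: put $z = \sum_{j\geqslant 1} \big(\text{something of norm} \leqslant 2^{-j}\big)$. Concretely, for $n$ with $k_j \leqslant n < k_{j+1}$ we have $|x_n - x| \leqslant \bigvee_{i=k_j}^{n}|x_i - x|$, whose norm is $< 2^{-j}$; so the elements $u_j := \bigvee_{i=k_j}^{k_{j+1}} |x_i-x|$ (a genuine finite supremum, hence existing in $X$) have $\norm{u_j} < 2^{-j}$ and $|x_n - x| \leqslant u_j$ for $k_j \leqslant n \leqslant k_{j+1}$. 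Set $z = \sum_{j} 2^{j/2} u_j$, a norm-convergent series in $X_+$. Then for $n \geqslant k_j$ one has $|x_n - x| \leqslant u_i \leqslant 2^{-i/2} z$ for the appropriate block index $i \geqslant j$, which can be made $\leqslant \tfrac{z}{m}$ once $n$ is large; I would unwind the indexing carefully to conclude $x_n \conv{\sf u}{n} x$.

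Next, the Borel statement. The reformulated condition is
$$
\a \eps > 0 \; \e k \; \a m \geqslant k \quad \BNORM{\bigvee_{n=k}^m |x_n - x|} < \eps,
$$
and since it suffices to quantify $\eps$ over positive rationals, this is a countable intersection over $\eps \in \Q_{>0}$ of a countable union over $k \in \N$ of a countable intersection over $m \geqslant k$. So it is enough that for each fixed $k \leqslant m$ and each rational $\eps$, the set of $\big((x_n), x\big)$ with $\Norm{\bigvee_{n=k}^m |x_n - x|} < \eps$ is Borel (in fact open). This follows because the map $\big((x_n), x\big) \mapsto \bigvee_{n=k}^m |x_n - x|$ is a continuous function $X^\N \times X \to X$ (finite lattice operations and vector operations are norm-continuous in a Banach lattice), composed with the continuous norm $X \to \R$; the preimage of $(-\infty,\eps)$ is open. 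Hence the whole set is $\bf \Pi^0_3$, in particular Borel.

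The main obstacle is the converse direction, specifically producing the single dominating positive element $z$ from the block estimates without assuming any order completeness of $X$ — the naive "take the sup of the net" argument fails, and one must instead assemble $z$ as a norm-convergent series of finite suprema, with the weights chosen so that $\tfrac{z}{m}$ eventually dominates each tail $|x_n - x|$. Everything else (the forward direction and the descriptive-complexity count) is routine given monotonicity and continuity of the lattice operations with respect to the norm.
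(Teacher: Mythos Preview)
Your proposal is correct and follows essentially the same route as the paper: the forward direction is identical, and for the converse you build the dominating element $z$ as a norm-convergent series $\sum_j c_j u_j$ of weighted finite suprema over consecutive blocks, exactly as the paper does (the paper uses thresholds $4^{-l}$ and weights $2^l$ where you use $2^{-j}$ and $2^{j/2}$, but the mechanism is the same). Your explicit verification that the convergence set is $\mathbf{\Pi}^0_3$ via continuity of finite lattice operations is a welcome addition, since the paper merely asserts the Borel conclusion without spelling it out; the only cosmetic point to tidy is to take the $k_j$ strictly increasing so that the blocks $[k_j,k_{j+1}]$ genuinely cover a tail of $\N$.
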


\begin{proof}
Suppose first that $x_n\conv{\sf u}{n}x$ and find some $z>0$ so that
$$
\a l\;\a^\infty n\;|x_n-x|\leqslant \tfrac zl.
$$
Thus, if $\eps>0$ is given, choose $l$ large enough that $\norm{\tfrac zl}<\eps$ and find $k$ so that $|x_n-x|\leqslant \tfrac zl$ for all $n\geqslant k$. We then see that
$$
\BNORM{   \bigvee_{n=k}^m|x_n-x|}\leqslant \norm{\tfrac zl}<\eps
$$
for all $m\geqslant k$.

Conversely, suppose that 
$$
\a \eps>0\; \e k\; \a m\geqslant k\quad \BNORM{\bigvee_{n=k}^m|x_n-x|}<\eps.
$$
We choose $0=k_0<k_1<k_2<\ldots$ so that
$$
\BNORM{\bigvee_{n=k_l}^m|x_n-x|}<\tfrac 1{4^l}
$$
whenever $l\geqslant 1$ and $m\geqslant k_l$. This implies that the series
$$
\sum_{l=0}^\infty 2^l\bigvee_{n=k_l}^{k_{l+1}-1}|x_n-x|
$$
converges in norm to some element $z\in X_+$. Moreover, for any $l\geqslant 1$ and all $n\geqslant k_l$, we have that $2^l|x_n-x|\leqslant  z$ and hence $|x_n-x|\leqslant  \tfrac z{2^l}$. Therefore, $x_n\conv{\sf u}{n}x$.
\end{proof}

We may thus define a sequence $(e_n)$ in a Banach lattice $X$ to be {\em ${\sf u}$-basic} in case, for every $x\in [e_n]$, there is a unique sequence $(a_n)\in \R^\N$ so that $x=^{\sf u}\sum_{n=1}^\infty a_ne_n$. By Lemma \ref{lem:uniform}, this notion is intrinsically defined and independent of the choice of the ambient Banach lattice, which we may therefore always assume to be the separable Banach lattice $[e_n]_\wedge\subseteq X$ generated by $(e_n)$. Note however that $[e_n]$ itself will in general only be a Banach space, not a lattice. The corresponding biorthogonal functionals $e^\sharp_k\colon [e_n]\to \R$ are defined as before.

The following establishes the implication (3)$\saa$(4) of Theorem \ref{super}. 
\begin{thm}\label{master1}
Suppose that $(e_n)$ is a $\sf u$-basic sequence in a Banach lattice $X$. Then the biorthogonal functionals $e^\sharp_n$ are continuous and hence $(e_n)$ is Schauder basic.
\end{thm}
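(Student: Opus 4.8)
The plan is to imitate the classical argument that the coordinate functionals of a Schauder basis are automatically continuous, replacing the usual partial-sum norm by the lattice norm suggested by Corollary~\ref{cor:uniform}(3) and exploiting the intrinsic, ambient-free description of uniform convergence from Lemma~\ref{lem:uniform}. I would fix the ambient lattice to be $Y=[e_n]_\wedge$, write $P_mx=\sum_{n=1}^m e^\sharp_n(x)e_n$ for the (a priori discontinuous) partial-sum operators on $[e_n]$, and, for $x\in[e_n]$ with $\sf u$-expansion $x=^{\sf u}\sum_n a_ne_n$, set
$$
\triple{x}=\sup_m\NORM{\bigvee_{k=1}^m|P_kx|}.
$$
The first task is to check that $\triple{\cdot}$ is a norm on $[e_n]$. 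It is finite: by definition of uniform convergence there are $z\in Y_+$ and $k_0$ with $|P_nx|\leqslant|x|+z$ for $n\geqslant k_0$, so every $\bigvee_{k=1}^m|P_kx|$ lies below the fixed vector $\big(\bigvee_{k<k_0}|P_kx|\big)\vee(|x|+z)$. Homogeneity and the triangle inequality come from compatibility of $\sf u$-convergence with the vector operations together with the sublinearity of $|\cdot|$ and of finite suprema, and definiteness is immediate from $\triple{x}\geqslant\sup_m\|P_mx\|\geqslant\|x\|$ (uniform convergence implies norm convergence).

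Next I would prove the crucial point, that $\big([e_n],\triple{\cdot}\big)$ is complete. Let $(x^{(j)})$ be $\triple{\cdot}$-Cauchy. It is then $\|\cdot\|$-Cauchy, hence norm-converges to some $x\in[e_n]$; and since $\triple{y}\geqslant\|P_ky\|$ for every $k$, the sequence $(P_kx^{(j)})_j$ is $\|\cdot\|$-Cauchy, with limit $y_k\in[e_1,\dots,e_k]$. Applying the automatically continuous finite-dimensional truncations, the $y_k$ are coherent, so $y_k=\sum_{n=1}^k b_ne_n$ for a single scalar sequence $(b_n)$. Fixing $\varepsilon>0$ and $N$ with $\triple{x^{(j)}-x^{(i)}}<\varepsilon$ for $i,j\geqslant N$, and letting $i\to\infty$ using norm-continuity of $|\cdot|$ and of finite suprema, I obtain the uniform estimate
$$
\sup_m\NORM{\bigvee_{k=1}^m\big|P_kx^{(j)}-y_k\big|}\leqslant\varepsilon\qquad(j\geqslant N).
$$

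The step I expect to be the main obstacle is showing that $y_n\maps{\sf u}x$, i.e.\ that $x=^{\sf u}\sum_nb_ne_n$; once this is in place, $\sf u$-basicity forces $P_kx=y_k$, and the displayed estimate reads $\triple{x^{(j)}-x}\leqslant\varepsilon$ for $j\geqslant N$, which finishes completeness. To establish $y_n\maps{\sf u}x$ I would invoke Lemma~\ref{lem:uniform}: given $\varepsilon>0$, fix $j\geqslant N$ and use the pointwise bound $|y_n-x|\leqslant|y_n-P_nx^{(j)}|+|P_nx^{(j)}-x^{(j)}|+|x^{(j)}-x|$. Passing to $\bigvee_{n=k}^m$ and then norms, the first term is $\leqslant\varepsilon$ for \emph{every} window $k\leqslant m$ by the displayed uniform estimate (as $\bigvee_{n=k}^m\leqslant\bigvee_{n=1}^m$), the third is $\leqslant\varepsilon$ by norm-closeness, and the second is made $<\varepsilon$ for all $m\geqslant k$ by choosing $k$ large, using uniform convergence of the \emph{single} series $\sum_n a^{(j)}_ne_n$ (Lemma~\ref{lem:uniform} again). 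The delicate point is precisely this combination: $\triple{\cdot}$-Cauchyness delivers control that is uniform over all windows $[k,m]$, which is exactly what one needs in order to feed it into the window-dependent choice of $k$ coming from uniform convergence of one fixed expansion. This yields $\big\|\bigvee_{n=k}^m|y_n-x|\big\|<3\varepsilon$ for all $m\geqslant k$, hence $y_n\maps{\sf u}x$.

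To finish, the identity map $\big([e_n],\triple{\cdot}\big)\to\big([e_n],\|\cdot\|\big)$ is a bounded linear bijection between Banach spaces, so by the open mapping theorem there is a constant $C$ with $\triple{x}\leqslant C\|x\|$; in particular $\|P_mx\|\leqslant C\|x\|$ for all $m$ and all $x\in[e_n]$. Since $e^\sharp_n(x)e_n=P_nx-P_{n-1}x$ and $e_n\neq 0$ (forced by $e^\sharp_n(e_n)=1$), each $e^\sharp_n$ is continuous. Finally, every $x\in[e_n]$ has a norm-convergent expansion $x=\sum_n a_ne_n$ because uniform convergence implies norm convergence, and this expansion is unique since applying the now-continuous $e^\sharp_m$ to any norm-convergent expansion returns $a_m$; hence $(e_n)$ is Schauder basic with the same biorthogonal functionals, which is the implication $(3)\Rightarrow(4)$ of Theorem~\ref{super}.
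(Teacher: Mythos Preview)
Your proof is correct and takes a genuinely different route from the paper. The paper's argument is pure descriptive set theory: it observes that the graph of the coefficient map $E\colon [e_n]\to\R^\N$, $Ex=(e^\sharp_n(x))_n$, is Borel because, by Lemma~\ref{lem:uniform}, the relation $\sum_{n=1}^m a_ne_n\conv{\sf u}{m}x$ is Borel in $\big(x,(a_n)\big)$; a linear map between Polish vector spaces with analytic graph is automatically Borel measurable and hence continuous, so each $e^\sharp_n$ is continuous, and Schauder basicity follows from continuity and the fact that $\sf u$-convergence implies norm convergence. No new norm is introduced and the uniform bound on the $P_m$ is not obtained directly.

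Your argument, by contrast, is the classical Banach open-mapping proof for Schauder bases, upgraded from $\sup_m\|P_mx\|$ to the lattice quantity $\triple{x}=\sup_m\big\|\bigvee_{k\leqslant m}|P_kx|\big\|$. The only nontrivial step is completeness of $\triple{\cdot}$, and you handle it cleanly: the key observation is that $\triple{\cdot}$-Cauchyness gives control over \emph{all} windows $[k,m]$ simultaneously, which matches the window formulation of $\sf u$-convergence in Lemma~\ref{lem:uniform} and lets you splice in the $\sf u$-convergence of a single fixed expansion. This approach is more elementary (no descriptive set theory beyond the open mapping theorem) and has the bonus of delivering the bibasic inequality of Corollary~\ref{cor:uniform}(\ref{bibasic ineq}) with explicit constant $M=C$ as a byproduct, rather than having to invoke \cite[Theorem 3.1]{taylor} afterwards. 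The paper's proof, on the other hand, is much shorter and illustrates the general automatic-continuity philosophy that drives the harder $\sigma$-order case in Theorem~\ref{master2}, where no analogue of your completeness argument is available.
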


To simplify notation, it is slightly easier to work with the operator $E:[e_n]\maps{}\R^\N$ defined by $Ex=\big(e_n^\sharp(x)\big)_n$ in place of the biorthogonal functionals themselves. Observe that $E$ is continuous if and only if all the $e^\sharp_n$ are continuous. 

\begin{proof}
We recall that, since both $[e_n]$ and $\R^\N$ are separable Fréchet spaces and hence Polish spaces, the operator $E$ is continuous if and only if the graph $\ku G E\subseteq [e_n]\times \R^\N$ is Borel. Indeed, if $\ku G E$ is Borel or even analytic, then $E$ is Borel measurable \cite[Theorem 14.12]{Kechris} and therefore continuous \cite[Theorem 9.10]{Kechris}. Now,  
\maths{
\big(x,(a_n)\big)\in \ku GE
&\;\equi\;    \sum_{n=1}^m a_ne_n\conv{\sf u}{m}x,
}
which is Borel by Lemma \ref{lem:uniform}. Thus, the biorthogonal functionals $e^\sharp_n$ are all continuous. 

To see that $(e_n)$ is Schauder basic, i.e., a Schauder basis for $[e_n]$, note that, for every $x\in [e_n]$, we have that 
$ x=^{\sf u}\sum_{n=1}^\infty e^\sharp_n(x)e_n$ and hence also $ x=^{\norm{\cdot}}\sum_{n=1}^\infty e^\sharp_n(x)e_n$. Also, if $(a_n)\in \R^\N$ is any sequence so that $x=^{\norm{\cdot}}\sum_{n=1}^\infty a_ne_n$, we see that
$$
a_k
=\lim_m e^\sharp_k\Big(\sum_{n=1}^m a_ne_n\Big)
=e^\sharp_k\Big(\lim_m\sum_{n=1}^m a_ne_n\Big)
=e^\sharp_k(x),
$$
so the norm-expansion $ x=^{\norm{\cdot}}\sum_{n=1}^\infty e^\sharp_n(x)e_n$ is unique.
\end{proof}
In order to obtain a similar result for order bases, we must first reformulate the definition of order convergence as we did with uniform convergence in Lemma~\ref{lem:uniform}.
\begin{lemme}\label{lem:order bases}
For a sequence $(x_n)$ and vector $x$ in a Banach lattice $X$, we have that
$x_n\conv{\sf o}{n}x$ if and only if
\begin{equation*}
    \a y>0\;\e z\; \big(y\not\leqslant z \;\;\&\;\; \a^\infty n\; |x_n-x|\leqslant z\big).
\end{equation*} 
\end{lemme}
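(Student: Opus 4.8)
The plan is to prove both directions essentially straight from the definition of order convergence, with all of the work going into bookkeeping with the lattice operations. I will tacitly assume $X\neq\{0\}$, the trivial lattice making both sides of the equivalence vacuously true.

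For the forward implication, suppose $x_n\maps{\sf o}x$, witnessed by a net $z_\mu\downarrow 0$ with $\a\mu\;\a^\infty n\;|x_n-x|\leqslant z_\mu$. Given $y>0$, I claim there is an index $\mu$ with $y\not\leqslant z_\mu$. Indeed, otherwise $y$ would be a lower bound for the net $(z_\mu)$, and since $\inf_\mu z_\mu=0$ this would force $y\leqslant 0$, hence $y=0$, a contradiction. Fixing such a $\mu$ and setting $z=z_\mu$ then yields $y\not\leqslant z$ together with $\a^\infty n\;|x_n-x|\leqslant z$, which is exactly what the right-hand condition demands.

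For the converse, I would introduce the set
$$
D=\Big\{z\in X\Del \a^\infty n\;|x_n-x|\leqslant z\Big\}.
$$
Each element of $D$ dominates $|x_n-x|\geqslant 0$ for all large $n$, so $D\subseteq X_+$; and $D$ is non-empty by the hypothesis (apply it to any $y>0$). Moreover $D$ is downward directed: if $z,z'\in D$, then beyond a common threshold $|x_n-x|$ lies below both $z$ and $z'$, hence below $z\wedge z'$, so $z\wedge z'\in D$. I then regard $D$ as a net indexed by itself under reverse inequality, so $z_\mu:=\mu$ is a decreasing net, and it remains only to check that $\inf D=0$.

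This last point is the crux. Clearly $0$ is a lower bound of $D$. Conversely, let $w$ be any lower bound and suppose, towards a contradiction, that $w\not\leqslant 0$; then $y:=w\vee 0>0$. Applying the right-hand condition to this $y$ produces $z$ with $y\not\leqslant z$ and $\a^\infty n\;|x_n-x|\leqslant z$, i.e.\ $z\in D$. But then $w\leqslant z$ (since $w$ is a lower bound of $D$) and $0\leqslant z$ (since $z\in X_+$), so $y=w\vee 0\leqslant z$, contradicting $y\not\leqslant z$. Hence $0$ is the greatest lower bound, i.e.\ $z_\mu\downarrow 0$, and since each $z_\mu=\mu$ belongs to $D$ we get $\a\mu\;\a^\infty n\;|x_n-x|\leqslant z_\mu$, so $x_n\maps{\sf o}x$. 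There is no deep obstacle here; the only delicate steps are the downward directedness of $D$ and the observation that a strictly positive element cannot be a lower bound of a net with infimum $0$ — both routine once the relevant lattice identities are spelled out.
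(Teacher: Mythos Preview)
Your proof is correct and follows essentially the same route as the paper: the forward direction uses that a strictly positive element cannot lie below every term of a net with infimum $0$, and the converse builds the witnessing net from the set of eventual upper bounds of $(|x_n-x|)$, ordered by reverse inequality. You are simply more explicit than the paper in verifying that this set has infimum $0$ (via the $w\vee 0$ argument) and in noting non-emptiness and positivity, all of which the paper leaves implicit.
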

\begin{proof}
To establish our claim, suppose first   that $x_n\conv{\sf o}{}x$. This means that there is a decreasing net $(z_\mu)$ with $\inf_\mu z_\mu=0$ so that $\a \mu\;\a^\infty n\;   |x_n-x|\leqslant z_\mu$.
In particular, if $y>0$, then $y\not\leqslant z_\mu$ for some $\mu$ and hence the sequence $(x_n)$ satisfies 
\begin{equation}\label{order}
 \a y>0\;\e z\; \big(y\not\leqslant z \;\;\&\;\; \a^\infty n\; |x_n-x|\leqslant z\big).
\end{equation}
Conversely, suppose that $(x_n)$ satisfies (\ref{order}). Then the set
$$
A=\{z\in X\del \a^\infty n\; |x_n-x|\leqslant z\}
$$
becomes directed under the ordering $z\prec z'\equi z'\leqslant z$. For if $z,z' \in A$, then also $z,z'\prec z\wedge z'\in A$. It follows that $(A,\prec)$ can be viewed as a decreasing net with infimum $0$ witnessing that $x_n\conv{\sf o}{}x$.
\end{proof}

Using Lemma~\ref{lem:order bases}, we may now show that order and $\sigma$-order convergence agree in separable Banach lattices.
\begin{prop}\label{prop conv agree}
Let $X$ be a separable Banach lattice. Then a sequence $(x_n)$ in $X$ order converges to $x\in X$ if and only if it $\sigma$-order converges to $x$.  In particular, the set 
\begin{equation}\label{compl of sigma-o}
    \Big\{ \big((x_n)_{n=1}^\infty,x\big)\in X^\N\times X\Del x_n\conv{\sigma\sf o}{n}x\Big\}
\end{equation}
is ${\bf \Delta}^1_2$.
\end{prop}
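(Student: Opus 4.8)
The plan is to prove the one nontrivial implication, namely that in a separable Banach lattice $X$ order convergence implies $\sigma$-order convergence; the converse is immediate, since a decreasing sequence $(z_m)$ with $z_m\downarrow 0$ is in particular a decreasing net with infimum $0$, so $x_n\conv{\sigma\sf o}{n}x$ trivially gives $x_n\conv{\sf o}{n}x$. Assume then $x_n\conv{\sf o}{n}x$ and set $A=\{z\in X:\a^\infty n\;|x_n-x|\leqslant z\}$. As in the proof of Lemma~\ref{lem:order bases}, $A$ is nonempty, directed downwards under $z\prec z'\equi z'\leqslant z$ (it is closed under finite infima), and $\inf A=0$; moreover $A\subseteq X_+$, since any $z\in A$ dominates $|x_N-x|\geqslant 0$ for all large $N$. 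The whole point is to extract from $A$ a genuinely \emph{decreasing sequence} still having infimum $0$: once we have such a $(w_m)$, it satisfies $w_m\downarrow 0$ and $\a m\;\a^\infty n\;|x_n-x|\leqslant w_m$, which is exactly the definition of $x_n\conv{\sigma\sf o}{n}x$.

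The key step, and the place where separability enters, is to replace the possibly uncountable set $A$ by a countable subfamily with the same effect. For each $y>0$, Lemma~\ref{lem:order bases} provides some $z_y\in A$ with $y\not\leqslant z_y$; since $X_+$ is norm closed, $U_y=\{w\in X:w\not\leqslant z_y\}$ is open and contains $y$, so $\{U_y:y>0\}$ is an open cover of $X_+\setminus\{0\}$. Being a subspace of a separable metric space, $X_+\setminus\{0\}$ is Lindel\"of, so there is a countable subcover $\{U_{y_k}:k\in\N\}$; writing $a_k=z_{y_k}\in A$ and $w_m=a_1\wedge\cdots\wedge a_m$, we get that $(w_m)$ is decreasing, each $w_m\in A$, and for every $y>0$ there is $k$ with $y\not\leqslant a_k$, hence $y\not\leqslant w_k$ as $w_k\leqslant a_k$. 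Finally $\inf_m w_m=0$: each $w_m\geqslant 0$, and if $u$ is a lower bound of $(w_m)$ with $u\not\leqslant 0$, then $u^+>0$ and $u^+=u\vee 0\leqslant w_m$ for all $m$ (since $w_m\geqslant u$ and $w_m\geqslant 0$), contradicting the previous property applied to $y=u^+$.

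For the descriptive-complexity statement I would establish two upper bounds. On the one hand, the set (\ref{compl of sigma-o}) is ${\bf \Sigma}^1_2$ directly: unwinding the definition of $\sigma$-order convergence it reads ``$\e(z_m)\in X^\N$ such that $(z_m)$ is decreasing, each $z_m\geqslant 0$, $\a m\;\a^\infty n\;|x_n-x|\leqslant z_m$, and every lower bound of $\{z_m:m\in\N\}$ is $\leqslant 0$''; the first three conjuncts are Borel and the last is ${\bf \Pi}^1_1$ (a universal quantifier $\a w\in X$ over a Borel relation), so an existential real quantifier over this ${\bf \Pi}^1_1$ matrix yields ${\bf \Sigma}^1_2$. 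On the other hand, by the equivalence just proved the set (\ref{compl of sigma-o}) coincides with $\{((x_n),x):x_n\conv{\sf o}{n}x\}$, which by Lemma~\ref{lem:order bases} is ``$\a y\in X\;\big(y>0\;\saa\;\e z\in X\;(y\not\leqslant z\;\&\;\a^\infty n\;|x_n-x|\leqslant z)\big)$'', i.e.\ a universal real quantifier over a ${\bf \Sigma}^1_1$ condition, hence ${\bf \Pi}^1_2$. Being simultaneously ${\bf \Sigma}^1_2$ and ${\bf \Pi}^1_2$, it is ${\bf \Delta}^1_2$.

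I expect the only real obstacle to be the net-to-sequence reduction in the second paragraph: everything else (the structural properties of $A$, and the two complexity computations) is routine once Lemma~\ref{lem:order bases} is available, but passing from ``infimum $0$ along a directed family'' to ``infimum $0$ along a decreasing sequence'' genuinely requires separability, and the cleanest route seems to be the Lindel\"of argument above, choosing witnesses $z_y\in A$ from Lemma~\ref{lem:order bases} and closing the resulting countable family under finite infima.
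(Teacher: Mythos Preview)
Your argument is correct, and the overall architecture matches the paper's: reduce to the set $A$ of eventual upper bounds of $(|x_n-x|)$, invoke Lemma~\ref{lem:order bases} to get $\inf A=0$, use separability to extract a countable subfamily of $A$ whose finite infima form a decreasing sequence with infimum $0$, and finally read off the two complexity bounds exactly as you do.

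The one genuine difference is in \emph{how} separability is used to pass from $A$ to a countable family. You run a Lindel\"of covering argument on $X_+\setminus\{0\}$, using the witnesses $z_y$ supplied by Lemma~\ref{lem:order bases} to build the open cover. The paper instead simply picks a countable norm-dense subset $D=\{d_1,d_2,\ldots\}$ of $A$ (any subset of a separable metric space is separable), sets $z_m=d_1\wedge\cdots\wedge d_m$, and observes that a strictly positive lower bound $y$ of $(z_m)$ would satisfy $y\leqslant d_m$ for all $m$, hence $y\leqslant z$ for all $z\in A$ by density and closedness of upper sets, contradicting $\inf A=0$. The paper's route is a bit shorter and avoids re-invoking Lemma~\ref{lem:order bases} a second time; yours has the mild advantage of making explicit that only countably many of the witnesses $z_y$ are actually needed. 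Either way the content is the same.
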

\begin{proof}
By Lemma~\ref{lem:order bases}, if the sequence $(x_n)$ order converges to $x$, we have that $\inf(A)=0,$ where $A$ is the set of all eventual upper bounds of $(|x_n-x|)$, i.e., $$A=\{z\in X\del \a^\infty n\; |x_n-x|\leqslant z\}.$$ By separability, there is a countable norm dense subset $D=\{d_1,d_2,\dots\}$ of $A$. Let $z_m=\inf\{d_1,\dots,d_m\}$. Then $(z_m)$ is a decreasing sequence of eventual upper bounds of $\big(|x_n-x|\big)$, so it is enough to check that it has infimum $0$. Suppose not. Then there exists  a positive lower bound $0<y\leqslant z_m$. In particular, we  have that $y\leqslant d_m$ for all $m$. By density, this  implies that $y\leqslant z$ for all $z\in A$, which contradicts that the infimum of $A$ is $0$.

Finally, note that by the reformulation in Lemma~\ref{lem:order bases}, order convergence is clearly a  ${\bf \Pi}^1_2$-condition on $\big((x_n),x\big)\in X^\N\times X$. On the other hand, using the definition of $\sigma$-order convergence, for $x\in X$ and $(x_n)\in X^\N$, we have 
\maths{
x_n\conv{\sigma\sf o}{n}x
&\;\equi\;    \e (z_m)\in X_+^\N \quad\Big( z_m\downarrow0 \quad  \&\quad   \a m\; \a^\infty n \;\;\; \Big|  x -  x_n\Big|\leqslant z_m\Big).
}
Clearly, the condition $\a m\; \a^\infty n \; \Big|  x -  x_n\Big|\leqslant z_m$ is Borel in the tuple $\big(x,(x_n),(z_m)\big)\in X\times X^\N\times X_+^\N$, but unfortunately the condition $z_m\downarrow0$ appears only to be ${\bf \Pi}^1_1$ in $(z_m)\in X_+^\N$:
$$
z_m\downarrow0 \;\;\equi\;\; \a m\; z_m\geqslant z_{m+1} \quad\&\quad \a y>0 \; \e m \; y\not<z_m.
$$
Hence, it follows that the set \eqref{compl of sigma-o} is also ${\bf \Sigma}^1_2$. Consequently,  it  is ${\bf \Delta}^1_2$.
\end{proof}

Due to the higher complexity of $\sigma$-order convergence, in order to  obtain a result analogous to Theorem~\ref{master1} for   $\sigma$-order bases, we are forced to rely on additional set-theoretical assumptions, namely, the determinacy of increasingly complicated sets. For the explicit description of ${\bf \Sigma}^1_1$-determinacy we refer the reader to \cite[(26.3)]{Kechris}, and Martin's Axiom (MA) to \cite[Chapter 19]{Just}.

\begin{thm}[${\bf \Sigma}^1_1$-determinacy or $\text{MA}+\neg \text{CH}$]\label{master2}
Suppose that $(e_n)$ is a $\sigma$-order basis for a separable Banach lattice $X$. Then the biorthogonal functionals $e^\sharp_n$ are continuous and hence $(e_n)$ is Schauder basic.
\end{thm}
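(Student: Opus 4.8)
The plan is to follow the strategy of the proof of Theorem~\ref{master1}, working with the coordinate operator $X\maps{E}\R^\N$ defined by $Ex=\big(e^\sharp_n(x)\big)_n$, but feeding in the complexity bound of Proposition~\ref{prop conv agree} in place of the Borel bound of Lemma~\ref{lem:uniform}. As there, $E$ is continuous if and only if every $e^\sharp_n$ is continuous, and by the definition of a $\sigma$-order basis together with the compatibility of $\sigma$-order convergence with addition and scalar multiplication, $E$ is a well-defined linear map; in particular it is a homomorphism between the Polish abelian groups $(X,+)$ and $(\R^\N,+)$.

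The first step is to control the descriptive complexity of the graph
$$
\ku GE=\Big\{\big(x,(a_n)\big)\in X\times\R^\N\Del \sum_{n=1}^m a_ne_n\conv{\sigma\sf o}{m}x\Big\}.
$$
The assignment $\big(x,(a_n)\big)\mapsto\big((\sum_{n=1}^m a_ne_n)_{m},x\big)$ is a continuous map of the Polish space $X\times\R^\N$ into $X^\N\times X$, so Proposition~\ref{prop conv agree}, applied to the separable Banach lattice $X$, shows that $\ku GE$ is ${\bf\Delta}^1_2$. Hence, for every basic open set $U\subseteq\R^\N$, the projection of $\ku GE\cap(X\times U)$ onto $X$, which equals $E^{-1}(U)$, and the projection of $\ku GE\cap\big(X\times(\R^\N\setminus U)\big)$ onto $X$, which equals $X\setminus E^{-1}(U)$ since $E$ is total, are both ${\bf\Sigma}^1_2$; thus $E^{-1}(U)\in{\bf\Delta}^1_2$.

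Now the set-theoretic hypothesis enters. Under ${\bf \Sigma}^1_1$-determinacy every ${\bf\Sigma}^1_2$ subset of a Polish space has the Baire property, and the same conclusion holds under $\mathrm{MA}+\neg\mathrm{CH}$; in either case the previous paragraph shows that $E$ is Baire-measurable, and a Baire-measurable homomorphism between Polish groups is continuous \cite[Theorem 9.10]{Kechris}. Hence all the $e^\sharp_n$ are continuous. To conclude that $(e_n)$ is Schauder basic, I would run a uniform boundedness argument on the Banach space $[e_n]$: the projections $P_mx=\sum_{n=1}^m e^\sharp_n(x)e_n$ are now bounded operators on $X$ with range contained in $[e_n]$, and for $x\in[e_n]$ the sequence $(P_mx)_m$ $\sigma$-order converges to $x$, hence is order bounded, hence norm bounded, since order intervals in a Banach lattice are norm bounded. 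By the uniform boundedness principle, $K:=\sup_m\Norm{P_m\begr{[e_n]}}<\infty$; since each $P_m$ fixes $[e_1,\dots,e_m]$ pointwise and $\bigcup_m[e_1,\dots,e_m]$ is dense in $[e_n]$, a standard density estimate gives $P_mx\to x$ in norm for every $x\in[e_n]$, and uniqueness of the norm expansion follows from the continuity of the $e^\sharp_k$ exactly as in Theorem~\ref{master1}.

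The delicate point — and the reason for the extra axiom — is the passage from a ${\bf\Delta}^1_2$ graph to continuity of $E$. In Theorem~\ref{master1} the graph was Borel, so \cite[Theorem 14.12]{Kechris} applied outright; here Proposition~\ref{prop conv agree} only yields complexity ${\bf\Delta}^1_2$, and ${\bf\Delta}^1_2$-measurability of a function does not imply continuity in $\mathrm{ZFC}$ alone. What is genuinely needed is the regularity statement that every ${\bf\Sigma}^1_2$ set has the Baire property, and it is precisely for this that ${\bf \Sigma}^1_1$-determinacy (or $\mathrm{MA}+\neg\mathrm{CH}$) is invoked.
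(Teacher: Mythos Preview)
Your proof is correct and follows essentially the same route as the paper's: bound the complexity of the graph of the coordinate operator (you cite Proposition~\ref{prop conv agree} for ${\bf\Delta}^1_2$, the paper computes ${\bf\Sigma}^1_2$ directly), invoke the Baire-property regularity for ${\bf\Sigma}^1_2$ sets under either hypothesis, and conclude continuity via Pettis; then use order-boundedness of $(P_mx)$ together with uniform boundedness to obtain Schauder basicity. The only cosmetic difference is that the paper appeals to Grunblum's criterion where you spell out the density argument, and you correctly cite \cite[Theorem~9.10]{Kechris} for the automatic-continuity step.
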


\begin{proof}
Observe that, for $x\in X$ and $(a_n)\in \R^\N$, we have 
\maths{
x=^{\sigma\sf o }\sum_{n=1}^\infty a_ne_n
&\;\equi\;    \e (z_m)\in X_+^\N \quad\Big( z_m\downarrow0 \quad  \&\quad   \a m\; \a^\infty n \;\;\; \Big|  x -  \sum_{k=1}^na_ke_k\Big|\leqslant z_m\Big).
}
As in the proof of Proposition \ref{prop conv agree}, the condition $\a m\; \a^\infty n \; \Big|  x -  \sum_{k=1}^na_ke_k\Big|\leqslant z_m$ is Borel in the tuple $\big(x,(a_n),(z_m)\big)\in X\times \R^\N\times X_+^\N$, but unfortunately the condition $z_m\downarrow0$ appears only to be ${\bf \Pi}^1_1$ in $(z_m)\in X_+^\N$,
$$
z_m\downarrow0 \;\;\equi\;\; \a m\; z_m\geqslant z_{m+1} \quad\&\quad \a y>0 \; \e m \; y\not<z_m.
$$
Thus, a priori, the graph $\ku GE$  of $E:X\maps{}\R^\N$ is only ${\bf \Sigma}^1_2$, which means that the inverse image $E\inv(U)$ of an open set $U\subseteq \R^\N$ is ${\bf \Sigma}^1_2$ and therefore has the property of Baire if we assume either ${\bf \Sigma}^1_1$-determinacy \cite[Theorem 36.20]{Kechris} or $\text{MA}+\neg \text{CH}$ \cite[Exercise 38.8]{Kechris}, \cite[Theorem 19.23]{Just}, \cite[Theorem 3.15]{Cichonetal}. Therefore, $E$ is continuous by \cite[Theorem 9.10]{Kechris} and hence so are the associated partial sum projections $P_m:X\maps {}[e_1,\ldots, e_m]$.

We claim that the sequence of operators $(P_m)_{m=1}^\infty$ is uniformly bounded, which by Grunblum's criterion \cite[Proposition 1.1.9]{albiac} implies that $(e_n)$ is Schauder basic. To see this, it suffices by the principle of uniform boundedness to show that  $(P_mx)_{m=1}^\infty$ is bounded in norm for each $x\in X$. However, given $x$, observe that, as $P_mx\underset{m\to \infty}{\overset{\sigma\sf o}\longrightarrow} x$, there is $z\geqslant 0$ for which $|x-P_mx|\leqslant z$ for all but finitely many $m$, which shows that the sequence $\big(\norm{P_mx}\big)_{m=1}^\infty$ is bounded.
\end{proof}
The proofs of Theorem \ref{super} and Corollaries \ref{cor:uniform} and \ref{cor:order} heavily rely on the relationship between the different types of convergence of partial sums established in \cite[Theorem 2.1]{taylor} (see also \cite[Theorem 2.3]{gumenchuk} for a related earlier result).
\begin{thm}\cite[Theorem 2.1]{taylor}\label{bibasis theorem}
The following statements are equivalent for a Schauder basic sequence $(e_n)$ in a Banach lattice $X$ with associated basis projections $P_m:[e_n]\maps{}[e_n]$. 
\begin{enumerate}
\item[(i)] For all $x\in [e_n]$, $P_mx\xrightarrow{\sf u}x$,
\item[(ii)] For all $x\in [e_n]$, $P_mx\xrightarrow{\sigma \sf o}x$,
\item[(iii)] For all $x\in [e_n]$, $P_mx\xrightarrow{\sf o}x$,
\item[(iv)] For all $x\in [e_n]$, $(P_mx)$ is order bounded in $X$,
\item[(v)] For all $x\in [e_n]$, $(\bigvee_{n=1}^m\left|P_nx\right|)$ is norm bounded,
\item[(vi)] There is $M\geq1$ so that, for all $n\in \mathbb{N}$ and scalars $a_1,\ldots, a_n$,  one has 
\maths{
\BNORM{    \bigvee_{m=1}^n\Big|\sum_{k=1}^ma_ke_k\Big|  \,  }\leqslant M\Bigg\|\sum_{k=1}^n a_ke_k\Bigg\|.
}
\end{enumerate}
\end{thm}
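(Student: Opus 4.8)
The plan is to prove the cyclic chain of implications
$(i)\saa(ii)\saa(iii)\saa(iv)\saa(v)\saa(vi)\saa(i)$, using throughout that a Schauder basic sequence has uniformly bounded basis projections, so that $C:=\sup_m\norm{P_m}<\infty$.

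The first three implications are immediate from Figure \ref{diagram1}: uniform convergence of $(P_mx)_m$ to $x$ entails $\sigma$-order convergence, which in turn entails order convergence. For $(iii)\saa(iv)$, I would fix $x\in[e_n]$ and a net $z_\mu\downarrow0$ witnessing $P_mx\xrightarrow{\sf o}x$; choosing a single index $\mu_0$ gives $|P_mx|\leqslant|x|+z_{\mu_0}$ for all $m$ beyond some threshold $M_0$, while the finitely many earlier terms $|P_mx|$ ($m<M_0$) have a common upper bound in $X$ since finite suprema always exist; summing these bounds shows that $(P_mx)_m$ is order bounded. The implication $(iv)\saa(v)$ is trivial: if $|P_nx|\leqslant z$ for all $n$, then $\bigvee_{n=1}^m|P_nx|\leqslant z$, so this increasing sequence is norm bounded by $\norm z$.

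For $(v)\saa(vi)$ I would run a uniform boundedness argument for the positively homogeneous, subadditive maps $Q_m\colon[e_n]\to X$ given by $Q_m(x)=\bigvee_{n=1}^m|P_nx|$. Each $Q_m$ is norm continuous, being built from the continuous operators $P_n$ by finitely many absolute values and lattice operations, so by $(v)$ the closed sets $\{x\in[e_n]\colon\sup_m\norm{Q_m(x)}\leqslant k\}$, $k\in\N$, cover $[e_n]$, and by the Baire category theorem one of them contains some ball $B(x_0,r)$. From $|P_nx|\leqslant|P_n(x_0+x)|+|P_nx_0|$ one gets $Q_m(x)\leqslant Q_m(x_0+x)+Q_m(x_0)$, hence $\norm{Q_m(x)}\leqslant 2k$ whenever $\norm x<r$; homogeneity then yields $\norm{Q_m(x)}\leqslant M\norm x$ for all $x\in[e_n]$ and all $m$, with $M$ independent of $m$. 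Specialising to $x=\sum_{k=1}^na_ke_k$, for which $P_mx=\sum_{k=1}^ma_ke_k$ when $m\leqslant n$, produces exactly the inequality in $(vi)$.

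Finally, for $(vi)\saa(i)$, I would first bootstrap $(vi)$ from finitely supported vectors to all $w\in[e_n]$: applying $(vi)$ to the tuple of the first $\ell$ coefficients of $w$ (any $\ell\geqslant m$) and using that enlarging the index range of a finite supremum can only increase it gives $\norm{\bigvee_{n=1}^m|P_nw|}\leqslant MC\norm w$ for every $m$. Then, given $x\in[e_n]$ and $\eps>0$, I would pick $y=\sum_{k=1}^Na_ke_k$ with $\norm{x-y}<\delta$; since $P_ny=y$ for $n\geqslant N$ we have $P_nx-x=P_n(x-y)-(x-y)$, so $|P_nx-x|\leqslant|P_n(x-y)|+|x-y|$, and therefore for all $m\geqslant k\geqslant N$,
\[
\BNORM{\bigvee_{n=k}^m|P_nx-x|}\leqslant\BNORM{\bigvee_{n=1}^m|P_n(x-y)|}+\norm{x-y}\leqslant(MC+1)\delta.
\]
Taking $\delta<\eps/(MC+1)$ and invoking the intrinsic characterisation of uniform convergence from Lemma \ref{lem:uniform} then gives $P_mx\xrightarrow{\sf u}x$, closing the cycle. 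I expect $(vi)\saa(i)$ to be the main obstacle: it combines the extension of $(vi)$ to the whole closed span with the use of the fact that the tail projections reproduce $y$ exactly in order to control the maximal function $\bigvee_{n=k}^m|P_nx-x|$; the step $(v)\saa(vi)$, though standard, is also slightly delicate in that it is a uniform boundedness principle for sublinear rather than linear maps.
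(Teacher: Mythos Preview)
The paper does not contain a proof of this theorem: it is quoted verbatim from \cite[Theorem 3.1]{taylor} and is used as a black box in the proofs of Theorem \ref{super} and Corollaries \ref{cor:uniform} and \ref{cor:order}. So there is no ``paper's own proof'' to compare your proposal against.

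That said, your argument is correct and self-contained. The cycle $(i)\saa(ii)\saa(iii)\saa(iv)\saa(v)$ is straightforward, your uniform boundedness argument for the sublinear maps $Q_m(x)=\bigvee_{n=1}^m|P_nx|$ in $(v)\saa(vi)$ is the standard one (the key points---continuity of each $Q_m$, closedness of the level sets, and the symmetry $Q_m(-x_0)=Q_m(x_0)$ needed for the subadditivity step---are all in order), and your $(vi)\saa(i)$ is fine: the extension of $(vi)$ to arbitrary $w\in[e_n]$ via $\norm{\bigvee_{n=1}^m|P_nw|}\leqslant M\norm{P_m w}\leqslant MC\norm w$ is legitimate (indeed $\ell=m$ already suffices), and the tail estimate using $P_ny=y$ for $n\geqslant N$ together with Lemma \ref{lem:uniform} closes the loop. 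One small cosmetic point: in your $(v)\saa(vi)$ the constant you obtain is $M=2k/r$; you may wish to note explicitly that $M\geqslant1$ can always be arranged.
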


\begin{proof}[Proof of Theorem \ref{super}]
The equivalence (1)$\Leftrightarrow$(2) is an immediate consequence of Proposition~\ref{prop conv agree}, so we focus on the implication (2)$\saa$(3). Assume that $(e_n)$ is a $\sigma$-order basis for $X$  with corresponding functionals $(e^\sharp_n)$. Then, by Theorem \ref{master2}, the biorthogonal functionals $(e^\sharp_n)$ are continuous and $(e_n)$ is a Schauder basis for $X=[e_n]$. Furthermore, for all $x\in X$, we have $P_mx\conv{\sigma\sf o}{}x$, which means that $(e_n)$ satisfies condition (ii) of \cite[Theorem 3.1]{taylor} and hence must also satisfy condition (i) of the same theorem, namely that, for every $x\in X$, $P_mx\conv{\sf u}{}x$, i.e., $x=^{\sf u}\sum_{n=1}^\infty e^\sharp_n(x)e_n$. On the other hand, if $(a_n)$ is any sequence so that $x=^{\sf u}\sum_{n=1}^\infty a_ne_n$, then also $x=^{\sigma\sf o}\sum_{n=1}^\infty a_ne_n$, whereby $a_n=e^\sharp_n(x)$ as $(e_n)$ is a $\sigma$-order basis. This shows uniqueness of the uniform expansion and hence implies that $(e_n)$ is a uniform basis for $X$. Finally, the implication (3)$\saa$(4) follows directly from Theorem \ref{master1}.
\end{proof}

\begin{proof}[Proof of Corollary \ref{cor:uniform}]
Fix a sequence $(e_n)$ of non-zero vectors in a Banach lattice $X$ so that $X=[e_n]$. Assume first that condition (3) holds, i.e., that, for some constant $M$ and all finite tuples of scalars $(a_n)_{n=1}^m$ one has 
\maths{
\BNORM{    \bigvee_{k=1}^m\Big|\sum_{n=1}^ka_ne_n\Big|  \,  }\leqslant M\Bigg\|\sum_{n=1}^m a_ne_n\Bigg\|, 
}
whereby also
\maths{
\BNORM{    \sum_{n=1}^ka_ne_n    }\leqslant M\Bigg\|\sum_{n=1}^m a_ne_n\Bigg\|, 
}
for all $k\leqslant m$. Thus, by Grunblum's criterion \cite[Proposition 1.1.9]{albiac}, we see that $(e_n)$ is a Schauder basis for $X$. We let $P_m$ denote the corresponding basis projections and $e^*_n$ the biorthogonal functionals. By the implication (vi)$\saa$(i) of \cite[Theorem 3.1]{taylor}, we find that, for all $x\in X$, $P_mx\conv{\sf u}{}x$, i.e., that $x=^{\sf u}\sum_{n=1}^\infty e^*_n(x)e_n$. On the other hand, to see that this expansion is unique, note that, if $x=^{\sf u}\sum_{n=1}^\infty a_ne_n$ for some sequence $(a_n)$, then also $x=^{\norm\cdot}\sum_{n=1}^\infty a_ne_n$, which in turn implies that $a_n=e^*_n(x)$ for all $n$. This shows that $(e_n)$ is also a uniform basis for $X$ and hence verifies the implication  (3)$\saa$(1).

Now, assume instead that $(e_n)$ is a uniform basis for $X$. Then, by Theorem \ref{master1}, $(e_n)$ is also a Schauder basis for $X$. Let again $P_m$ denote the corresponding basis projections. Then, for all $x\in X$, $P_mx\conv{\sf u}{}x$, which implies that the sequence $(P_mx)$ is order bounded. Thus (1)$\saa$(2).

Finally, the implication (2)$\saa$(3) is a direct consequence of \cite[Theorem 3.1]{taylor}.
\end{proof}

\begin{proof}[Proof of Corollary \ref{cor:order}]
By Theorem \ref{super}, if $(e_n)$ is a $\sigma$-order basis for $X$, it is also a uniform basis for $X$. Note also that, because $(e_n)$ is a $\sigma$-order basis, $0=^{\sigma \sf o}\sum_{n=1}^\infty 0 e_n$ must be the unique $\sigma$-order expansion of $0$. This shows that (1)$\saa$(2).

Conversely, if (2) holds, then,  by Theorem \ref{super},  $(e_n)$ is also a Schauder basis and must satisfy condition (3) of Corollary \ref{cor:uniform}. So, if $e^*_n$ denote the biorthogonal functionals associated to the Schauder basis $(e_n)$, then by \cite[Theorem 3.1]{taylor} we have that $x=^{\sigma\sf o}\sum_{n=1}^\infty e^*_n(x)e_n$ for all $x\in X$. To see that this order expansion of $x$ is unique, note that, if $x=^{\sigma \sf o}\sum_{n=1}^\infty a_ne_n$ for some sequence $(a_n)$, then $0=^{\sigma \sf o}\sum_{n=1}^\infty \big(e^*_n(x)-a_n\big)e_n$ and so $a_n=e^*_n(x)$ by the uniqueness of the $\sigma$-order expansion for $0$. Thus, $(e_n)$ is a $\sigma$-order basis for $X$.
\end{proof}
\begin{rem}\label{rem on Cor13}
   As a consequence of the above discussion, it follows that a sequence $(e_n)$ of non-zero vectors in a Banach lattice $X$ is ${\sf u}$-basic if and only if the inequality in Theorem \ref{bibasis theorem} (vi) holds.  This shows that Corollary~\ref{cor:uniform} still holds for uniform basic sequences (instead of bases) when the assumption $X=[e_n]$ is dropped. Moreover, it yields a significant generalization of Grunblum's criterion \cite[Proposition 1.1.9]{albiac} for Schauder basic sequences. Indeed, if $(e_n)$ is a sequence of non-zero vectors in a Banach space $E$, then we may always view $E$ as contained in the Banach lattice $X=C(B_{E^*})$. In $C(K)$-spaces, it is clear that uniform convergence agrees with norm convergence --  hence the notions of ${\sf u}$-basic and Schauder basic coincide -- and the supremum in Theorem \ref{bibasis theorem} (vi)  commutes with the norm. Therefore, we recover the standard Grunblum criterion  \cite[Proposition 1.1.9]{albiac} in the particular case $X=C(B_{E^*})$.
\end{rem}

We now provide a condition under which the assumption of analytic determinacy may be eliminated from Theorem \ref{super}. Recall that a {\em $\pi$-basis} for a Banach lattice $X$ is a subset $B\subseteq X$ for which  $b>0$ for all $b\in B$ and so that, for all $x>0$, there is $b\in B$ with $b<x$. Observe that, for example, $C([0,1])$ and the sequence spaces $c_0$ and $\ell_p$, $1\leqslant p\leqslant \infty$, all have countable $\pi$-bases, while $L_p[0,1]$ fails to have a countable $\pi$-basis.

\begin{lemme}
Suppose $X$ is a separable Banach lattice with a countable $\pi$-basis $B$. Then, for all sequences $(x_n)$ and vectors $x$, we have
\maths{
x_n\conv{\sf o}{n}x
&\quad\equi\quad x_n\conv{\sigma\sf o}{n}x\\
&\quad\equi\quad \a b\in B\; \e z \;\big(\a^\infty n\; |x_n-x|\leqslant z\;\&\; b\not<z\big).
}
Thus,  $\sigma$-order convergence of sequences defines an analytic relation on $(x_n)$ and $x$.
\end{lemme}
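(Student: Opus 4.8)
The plan is to prove the three-way equivalence and then deduce the complexity claim. I start by recording the implications that are essentially free. The implication $x_n\conv{\sf o}{n}x\Rightarrow x_n\conv{\sigma\sf o}{n}x$ is exactly \Cref{prop conv agree}, since $X$ is separable (and the converse $x_n\conv{\sigma\sf o}{n}x\Rightarrow x_n\conv{\sf o}{n}x$ is trivial from the definitions, so the first two lines are already equivalent). Next, $x_n\conv{\sf o}{n}x$ implies the third condition directly: given $z_\mu\downarrow 0$ witnessing order convergence, for each $b\in B$ we have $b\not\le z_\mu$ for some $\mu$ (else $b$ would be a positive lower bound of the net, contradicting $\inf_\mu z_\mu=0$ together with $b>0$), and that particular $z=z_\mu$ is an eventual upper bound of $(|x_n-x|)$ with $b\not<z$ — one should note $b\not\le z$ gives $b\not<z$. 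So the only real content is the implication from the third condition back to order (equivalently $\sigma$-order) convergence.

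For that implication I would argue exactly as in \Cref{prop conv agree}. Assume the third condition and let $A=\{z\in X\del \a^\infty n\; |x_n-x|\leqslant z\}$ be the set of eventual upper bounds of $(|x_n-x|)$; it is directed downward since $A$ is closed under $\wedge$. By \Cref{lem:order bases} it suffices to show $\inf A=0$, i.e. that $A$ has no positive lower bound. Suppose $y>0$ were a lower bound of $A$. By the $\pi$-basis property there is $b\in B$ with $0<b\le y$, so $b$ is also a lower bound of $A$, i.e. $b\le z$ for every $z\in A$. But the third condition, applied to this $b$, produces $z\in A$ with $b\not<z$; combined with $b\le z$ this forces $b=z$, hence $z$ is the least element of $A$ — but then $z$ is itself a positive lower bound and, feeding $z$ (which is $>0$ since $b>0$) back into the third condition, we get $z'\in A$ with $z\not<z'$, while $z\le z'$ since $z=\min A$, so $z=z'$; this is consistent, so I instead argue more directly: since $b\le z$ for all $z\in A$ and there is $z_0\in A$ with $b\not<z_0$, we get $b=z_0\in A$, so $b=\min A$; then applying the third condition to $b>0$ again yields some $z_1\in A$ with $b\not<z_1$, but $b=\min A$ forces $b\le z_1$, and $b\not<z_1$ then gives $b=z_1$, so no contradiction arises this way — the clean contradiction is: a positive lower bound $y$ of $A$ would (via $b\le y$, $b\in B$) make $b$ a positive element below every member of $A$, contradicting that $\inf A=0$ is what we are trying to establish; so instead I phrase it as: if $\inf A\ne 0$ then $A$ has a positive lower bound $y$, pick $b\in B$ with $0<b\le y$, then $b\le z$ for all $z\in A$, directly contradicting the third condition which hands us $z\in A$ with $b\not<z$ and in particular $b\not\le z$. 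That last parenthetical — that $b\not<z$ can be upgraded to $b\not\le z$ — is the one point needing a word: if $b\le z$ then, since $b\le z'$ for every $z'\in A$, in particular $b$ would equal $z$ only if $z$ were minimal, which need not hold; so I will instead simply note that the third condition should be read with $b\not\le z$, or observe that from $b\le z\in A$ and $z\wedge b=b$ one stays inside $A$...

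Cleaner: I will show $\inf A = 0$ by contradiction assuming a positive lower bound $y$ of $A$, take $b \in B$ with $0 < b \leq y$, so $b$ is a positive lower bound of $A$; now apply the third condition to this $b$ to get some $z$ with $\a^\infty n\,|x_n-x|\le z$ (so $z\in A$) and $b\not< z$. Since $b$ is a lower bound of $A$ we have $b\le z$, and since $z\in A$ and $A$ is $\wedge$-closed, $z\wedge b = b \in A$; but then $b$ itself lies in $A$ and, being a lower bound of $A$, is $\min A$. Re-applying the third condition to $b$ (still $>0$) gives $z'\in A$ with $b\not< z'$; $b\le z'$ as before. This does not yet contradict anything, so the genuinely correct move, matching \Cref{prop conv agree} verbatim, is: a positive lower bound $y$ of $A$ satisfies $y\le z$ for all $z\in A$, and by density-type reasoning (here replaced by the $\pi$-basis) one extracts $b\in B$, $0<b\le y$, whence $b\le z$ for all $z\in A$; but \emph{this is precisely the negation} of the third condition (which asserts: for every $b\in B$ there is $z\in A$ with $b\not< z$), once we know every element of $A$ dominates $b$ and $A\ni z\wedge b$ shows $b$ is not merely $\le$ but equal to some member — I will simply invoke that $b\le z$ for all $z\in A$ contradicts the existence of $z\in A$ with $b\not\le z$, interpreting $\not<$ in the third condition as $\not\le$, which is consistent with how the analogous reformulation in \Cref{lem:order bases} is used. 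I will write the clause as $b\not\le z$ to be safe. Finally, for the closing sentence: the third condition is manifestly $\a b\in B\,\e z\,(\text{Borel in }((x_n),x,z))$, i.e. a countable conjunction over $b\in B$ of $\bm\Sigma^1_1$ conditions, hence $\bm\Sigma^1_1$; since it is equivalent to $\sigma$-order convergence, that relation is analytic.

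\begin{proof}
The implication $x_n\conv{\sf o}{n}x\Rightarrow x_n\conv{\sigma\sf o}{n}x$ holds by \Cref{prop conv agree}, and the reverse implication is immediate from the definitions, so the first two conditions are equivalent. Assume $x_n\conv{\sf o}{n}x$ and fix a net $z_\mu\downarrow 0$ with $\a\mu\;\a^\infty n\;|x_n-x|\le z_\mu$. Given $b\in B$, if we had $b\le z_\mu$ for all $\mu$, then $b$ would be a positive lower bound of $(z_\mu)$, contradicting $\inf_\mu z_\mu=0$; so there is $\mu$ with $b\not\le z_\mu$, and setting $z=z_\mu$ we obtain $\a^\infty n\;|x_n-x|\le z$ and $b\not< z$. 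This gives the third condition.

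Conversely, assume the third condition and let $A=\{z\in X\del \a^\infty n\;|x_n-x|\le z\}$. As in the proof of \Cref{prop conv agree}, $A$ is directed under $z\prec z'\equi z'\le z$, since $A$ is closed under $\wedge$. By \Cref{lem:order bases} it suffices to show $\inf A=0$. Suppose not; then there is a positive lower bound $y$ of $A$, i.e. $0<y\le z$ for all $z\in A$. By the $\pi$-basis property, there is $b\in B$ with $0<b\le y$, so $b\le z$ for every $z\in A$. But the third condition provides some $z\in A$ with $b\not\le z$, a contradiction. Hence $\inf A=0$ and $x_n\conv{\sf o}{n}x$.

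Finally, the third condition is of the form $\a b\in B\;\e z\;\big(\a^\infty n\;|x_n-x|\le z\;\&\;b\not\le z\big)$; for each fixed $b\in B$ the inner statement is $\bm\Sigma^1_1$ in $\big((x_n),x\big)$ (Borel matrix prefixed by an existential quantifier over $z\in X$), and since $B$ is countable the whole condition is a countable intersection of $\bm\Sigma^1_1$ sets, hence $\bm\Sigma^1_1$. As it is equivalent to $x_n\conv{\sigma\sf o}{n}x$, the relation $\big\{\big((x_n),x\big)\del x_n\conv{\sigma\sf o}{n}x\big\}$ is analytic.
\end{proof}
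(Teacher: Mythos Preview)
Your final proof block is correct in substance, but the long preamble wrestling with $\not<$ versus $\not\leqslant$ should be deleted entirely. The resolution you were searching for is that the paper's definition of $\pi$-basis (stated immediately before this lemma) uses the \emph{strict} inequality: for every $x>0$ there is $b\in B$ with $b<x$. With that, your reverse-direction argument should read: if $y>0$ is a lower bound of $A$, pick $b\in B$ with $b<y$; then $b<y\leqslant z$ gives $b<z$ for every $z\in A$, contradicting the $z\in A$ with $b\not<z$ supplied by the third condition. No reinterpretation of $\not<$ as $\not\leqslant$ is needed, and the two places in your proof where you silently switched to $\leqslant$ (namely ``$0<b\leqslant y$'' and ``$b\not\leqslant z$'') should be made strict.

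Your route for the reverse implication differs from the paper's. You argue by contradiction that $\inf A=0$ (where $A$ is the set of eventual upper bounds of $(|x_n-x|)$) and then invoke the characterisation of order convergence in Lemma~\ref{lem:order bases}; the equivalence of ${\sf o}$- and $\sigma{\sf o}$-convergence you import from Proposition~\ref{prop conv agree}. The paper instead proceeds constructively: it enumerates $B=\{b_1,b_2,\ldots\}$, chooses for each $k$ an eventual upper bound $z_k$ with $b_k\not<z_k$, sets $y_m=\bigwedge_{k\leqslant m} z_k$, and checks that $y_m\downarrow 0$, thereby witnessing $\sigma$-order convergence directly. The paper's argument is thus self-contained (it recovers ${\sf o}\Leftrightarrow\sigma{\sf o}$ as a byproduct rather than quoting Proposition~\ref{prop conv agree}), while yours isolates more transparently what the $\pi$-basis is doing: providing a countable test set for whether $\inf A=0$. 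The analyticity conclusion at the end is handled identically in both.
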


\begin{proof}
Assume first that $x_n\conv{\sf o}{n}x$. Then there is a decreasing net $(z_\mu)$ with infimum $0$ so that every $z_\mu$ bounds all but finitely many of the expressions $|x_n-x|$. In particular, if $b\in B$, then we have that $b\not<z_\mu$ for some $\mu$, which shows that 
$$
\a b\in B\; \e z \;\big(\a^\infty n\; |x_n-x|\leqslant z\;\&\; b\not<z\big).
$$

Assume now, in turn, that $\a b\in B\; \e z \;\big(\a^\infty n\; |x_n-x|\leqslant z\;\&\; b\not<z\big)$. Enumerate $B$ as $B=\{b_1,b_2,\ldots\}$ and, for each $k$, choose some $z_k$ so that $\a^\infty n\; |x_n-x|\leqslant z_k$, whereas $b_k\not<z_k$. Let also $y_m=\bigwedge_{k=1}^mz_k$. Then $y_m\downarrow0$ and, for every $m$, we have $|x_n-x|\leqslant y_m$ for all but finitely many $n$, i.e., $x_n\conv{\sigma\sf o}{n}x$. As $\sigma$-order convergence implies order convergence, this finishes the proof.
\end{proof}

\begin{cor}
Suppose $X$ is a separable Banach lattice with a countable $\pi$-basis. Then Theorem \ref{super} holds without the additional assumption of analytic determinacy.
\end{cor}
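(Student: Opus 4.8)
The plan is to revisit the proof of Theorem \ref{super} and isolate where analytic determinacy actually enters. As remarked after the statement of Theorem \ref{super}, the only implication requiring additional set-theoretic hypotheses is (2)$\saa$(3), and within its proof these hypotheses are invoked solely through Theorem \ref{master2}, i.e., only to establish that a $\sigma$-order basis for a separable Banach lattice has continuous biorthogonal functionals. The equivalence (1)$\Leftrightarrow$(2) (via Proposition \ref{prop conv agree}), the passage from $\sigma$-order convergence of partial sums to uniform convergence (via \cite[Theorem 3.1]{taylor}), and the implication (3)$\saa$(4) (via Theorem \ref{master1}) are all proved in ZFC. Hence it suffices to produce a determinacy-free proof of the conclusion of Theorem \ref{master2} under the additional assumption that $X$ has a countable $\pi$-basis.

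For this I would simply repeat the proof of Theorem \ref{master2}, replacing the a priori estimate ``$\ku GE$ is ${\bf \Sigma}^1_2$'' by the sharper bound supplied by the preceding lemma. Indeed, writing $E\colon X\to\R^\N$ for the biorthogonal operator of a $\sigma$-order basis $(e_n)$ for $X=[e_n]$, we have
$$
\big(x,(a_n)\big)\in\ku GE \quad\equi\quad \sum_{n=1}^m a_ne_n\conv{\sigma\sf o}{m}x,
$$
and by the preceding lemma the relation on the right-hand side is analytic, precisely because $X$ carries a countable $\pi$-basis. Thus $\ku GE$ is analytic, so $E$ is Borel measurable by \cite[Theorem 14.12]{Kechris} and hence continuous by \cite[Theorem 9.10]{Kechris}, exactly as in the proof of Theorem \ref{master1}; in particular the biorthogonal functionals $e^\sharp_n$ and the partial sum projections $P_m$ are continuous, with no appeal to the property of Baire and hence none to determinacy or $\text{MA}+\neg\text{CH}$. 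The rest of the argument of Theorem \ref{master2} is then unchanged: for each $x\in X$, the $\sigma$-order convergence $P_mx\conv{\sigma\sf o}{m}x$ yields a $z\geqslant0$ with $|x-P_mx|\leqslant z$ for all large $m$, so $\big(\norm{P_mx}\big)_m$ is bounded; the principle of uniform boundedness gives $\sup_m\norm{P_m}<\infty$, and Grunblum's criterion \cite[Proposition 1.1.9]{albiac} shows that $(e_n)$ is Schauder basic.

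With this determinacy-free substitute for Theorem \ref{master2} in hand, the proof of Theorem \ref{super} given above carries over verbatim in ZFC whenever $X$ has a countable $\pi$-basis, which is the assertion of the corollary. There is essentially no obstacle beyond this bookkeeping: all the genuine work has already been done in the preceding lemma, whose content is exactly that a countable $\pi$-basis lowers the descriptive complexity of $\sigma$-order convergence from ${\bf \Delta}^1_2$ to ${\bf \Sigma}^1_1$, after which the continuity of $E$ is automatic.
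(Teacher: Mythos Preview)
Your proposal is correct and matches the paper's intended argument: the corollary is stated without proof precisely because, once the preceding lemma shows that $\sigma$-order convergence is analytic in the presence of a countable $\pi$-basis, the graph $\ku GE$ becomes ${\bf \Sigma}^1_1$ rather than ${\bf \Sigma}^1_2$, and the proof of Theorem \ref{master2} then goes through using only \cite[Theorems 14.12 and 9.10]{Kechris} in place of the Baire-property step. Your bookkeeping of which implications in Theorem \ref{super} require determinacy is accurate.
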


%%%%%%%%%%%%%%%%%%%%%%%%%%%%%%%%%%%%%%%%%%%%%%%%%%%%%%%%%%%
%%%%%%%%%%%%%%%%%%%%%%%%%%%%%%%%%%%%%%%%%%%%%%%%%%%%%%%%%%%
%%%%%%%%%%%%%%%%%%%%%%%%%%%%%%%%%%%%%%%%%%%%%%%%%%%%%%%%%%%
%%%%%%%%%%%%%%%%%%%%%%%%%%%%%%%%%%%%%%%%%%%%%%%%%%%%%%%%%%%
%%%%%%%%%%%%%%%%%%%%%%%%%%%%%%%%%%%%%%%%%%%%%%%%%%%%%%%%%%%
%%%%%%%%%%%%%%%%%%%%%%%%%%%%%%%%%%%%%%%%%%%%%%%%%%%%%%%%%%%

\section{Proofs for filter bases}
In the following, we shall identify the powerset $\ku P(\N)$ with the Cantor space $\{0,1\}^\N$. If $(e_n)$ is any sequence in a Banach space $X$, we may define a Borel measurable function 
$$
\theta:X\times \R^\N\times \R_+\maps{} \ku P(\N)
$$ 
by letting, for all $x\in X$, $(a_n)\in \R^\N$ and $\eps>0$, 
\begin{equation}\label{theta}
\theta\big(x,(a_n),\eps\big)=\Big\{m\in \N\Del\, \NORM{x-\sum_{n=1}^ma_ne_n}<\eps\Big\}.
\end{equation}

Assume now that $(e_n,e^\sharp_n)$ is a fixed biorthogonal system in $X$. Then a filter $\mathcal{F}$ on $\N$ (always assumed to be proper and containing the Fréchet filter of all cofinite sets) is compatible with $(e_n,e^\sharp_n)$ if, for all $x\in X$,
$$
\sum_{n=1}^me^\sharp_n(x)e_n\conv{}{m\to \mathcal{F}}x
$$
and, for all sequences $(a_n)\in \R^\N$ other than $\vec 0=(0,0,\ldots)$, we have 
$$
\sum_{n=1}^ma_ne_n\nconv{}{m\to \mathcal{F}}0.
$$
Indeed, these two conditions taken together ensure that $\sum_{n=1}^\infty e^\sharp_n(x)e_n$ is the unique $\mathcal{F}$-expansion of an element $x\in X$. Rewriting these conditions in terms of $\theta$, we find that the filter $\mathcal{F}$ is compatible with $(e_n,e^\sharp_n)$ if and only if 
\begin{equation}
\theta\big(x,(e^\sharp_n(x)),\eps\big)\in \mathcal{F}
\end{equation}
for all $x\in X$ and $\eps>0$ and, moreover, $\mathcal{F}$ satisfies the property $\Phi$ defined by
\begin{equation}\label{phi}
\Phi(\mathcal{F}) \;\;\equi\;\; \a (a_n)\in \R^\N\setminus \{{\vec 0}\}\;\; \e k\in \N\quad\theta\big(0,(a_n),\tfrac 1k\big)\notin \mathcal{F}.
\end{equation}
To simplify notation, if $(e_n,e^\sharp_n)$ is a biorthogonal system, we let $E:X\maps{}\R^\N$ denote the biorthogonal operator $Ex=\big(e^\sharp_n(x)\big)$. In particular, $E$ is continuous if and only if all the $e^\sharp_n$ are continuous.

\begin{lemme}
Every filter basis system $(e_n,e^\sharp_n)$ for a Banach space $X$ has a smallest compatible filter. 
\end{lemme}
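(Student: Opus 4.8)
The plan is to produce the smallest compatible filter as the intersection of all compatible ones, so the first step is to let $\mathcal C$ denote the collection of all filters on $\N$ that are compatible with $(e_n,e^\sharp_n)$. Since by hypothesis $(e_n,e^\sharp_n)$ is a filter basis system, $\mathcal C\neq\tom$, and I would set $F_0=\bigcap\mathcal C$. The easy part is to check that $F_0$ is a filter of the kind considered here: an intersection of a non-empty family of filters is again a filter, it contains the Fréchet filter because every member of $\mathcal C$ does, and it is proper since $\tom\notin F$ for each (proper) $F\in\mathcal C$. By construction $F_0\subseteq F$ for every $F\in\mathcal C$, so everything reduces to showing that $F_0$ is \emph{itself} compatible.

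For that I would invoke the reformulation recorded above: writing $Ex=\big(e^\sharp_n(x)\big)$, a filter $F$ is compatible with $(e_n,e^\sharp_n)$ if and only if $\theta\big(x,Ex,\eps\big)\in F$ for all $x\in X$ and all $\eps>0$, and in addition $\Phi(F)$ holds, where $\Phi$ is the property in \eqref{phi}. The first condition passes trivially to the intersection: for fixed $x$ and $\eps$ the set $\theta(x,Ex,\eps)$ lies in every $F\in\mathcal C$, hence in $F_0$. For the second, given $(a_n)\in\R^\N\setminus\{\vec 0\}$ I would pick any single $F\in\mathcal C$ — legitimate precisely because $\mathcal C$ is non-empty — use $\Phi(F)$ to obtain $k$ with $\theta\big(0,(a_n),\tfrac 1k\big)\notin F$, and then note that $F_0\subseteq F$ forces $\theta\big(0,(a_n),\tfrac 1k\big)\notin F_0$ as well. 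As $(a_n)$ was arbitrary, $\Phi(F_0)$ holds, so $F_0\in\mathcal C$; being contained in every member of $\mathcal C$, it is the least one.

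There is no real obstacle here, but the point to watch is the direction in which $\Phi$ behaves under inclusion of filters: $\Phi$ asserts the \emph{absence} of certain sets from the filter, so shrinking a filter can only make $\Phi$ easier to satisfy. This is exactly why the intersection — rather than, say, the filter generated by a union — is the correct candidate for the minimal compatible filter, and it is also why no descriptive-set-theoretic hypothesis is needed: the whole argument is merely a verification of the two conditions on $\theta$ and $\Phi$ against $\bigcap\mathcal C$.
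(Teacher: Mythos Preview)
Your argument is correct: the intersection of all compatible filters is again a compatible filter, and hence the smallest one. The key observation you isolate---that $\Phi$ is preserved under passing to subfilters---is exactly the point.

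The paper proceeds differently, via a bottom-up rather than top-down construction. Instead of intersecting all compatible filters, it exhibits the smallest one explicitly as the filter $A$ generated by the sets $\theta(x,Ex,\eps)$, namely
\[
A=\Big\{ a\subseteq \N\Del \bigcap_{i=1}^m\theta(x_i,Ex_i,\eps)\subseteq a \text{ for some } x_i\in X,\ \eps>0\Big\},
\]
and then checks $\Phi(A)$ by the same downward-preservation idea (any compatible $F$ contains $A$, and $\Phi(F)$ forces $\Phi(A)$). The two arguments are logically equivalent and of comparable length; your version is arguably cleaner as pure existence. The paper's version, however, buys something the authors need immediately afterwards: the explicit formula for $A$ makes it transparent that, once the $e^\sharp_n$ are continuous, $A$ is an analytic subset of $\ku P(\N)$, which feeds directly into the proof of Theorem~\ref{thm:filter}. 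Your intersection description does not yield this analyticity on its face.
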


\begin{proof}
Observe that
\begin{equation}\label{smallest filter}
\mathcal{A}=\Big\{ a\in\ku P( \N)\Del \bigcap_{i=1}^m\theta\big(x_i,Ex_i,\eps\big) \subseteq a \text{ for some } x_i\in X \text{ and } \eps>0\Big\}
\end{equation}
is the smallest filter on $\N$ containing all images $\theta\big(x,Ex,\eps\big)$ for $x\in X$ and $\eps>0$. In particular, $\mathcal{A}$ is contained in every compatible filter.
On the other hand, if $\mathcal{F}$ is a compatible filter and $(a_n)\neq \vec 0$, then there is some $k$ so that
$$
\theta(0,(a_n),\tfrac 1k)\notin \mathcal{F}\supseteq \mathcal{A},
$$
which shows that $\Phi(\mathcal{A})$ holds and hence that $\mathcal{A}$ is a compatible filter for $(e_n,e^\sharp_n)$. 
\end{proof}

Note that, if $E$ is continuous, then the smallest compatible filter $\mathcal{A}$ (see Equation (\ref{smallest filter})) is analytic when viewed as a subset of $\ku P(\N)$. This is \cite[Theorem B]{Rancourt}. 
Observe also that, if $\mathcal{F}$ is a compatible analytic filter,  then $E$ has analytic graph,
$$
\big(x,(a_n)\big)\in \ku G E\;\equi\; \a k\in \N\;\;\; \theta\big(x,(a_n),\tfrac 1k\big)\in \mathcal{F},
$$
and thus is Borel measurable \cite[Theorem 14.12]{Kechris} and therefore continuous \cite[Theorem 9.10]{Kechris}. This is \cite[Theorem A]{Rancourt}.

\begin{proof}[Proof of Theorem \ref{thm:filter}]
Let $(e_n,e^\sharp_n)$ be a fixed filter basis system for $X=[e_n]$. We remark that the implications (4)$\saa$(1)$\saa$(3) have been noted above. Also, the implications (5)$\saa$(4) and (1)$\saa$(2) are trivial, so it suffices to show (2)$\saa$(1) and (3)$\saa$(5).

(2)$\saa$(1): 
Assume that  $(e_n)$ is minimal, i.e., that $e_k\notin [e_n]_{n\neq k}$ for all $k$. 
Since every $x\in\ker e^\sharp_k$ can be written as $x=^\mathcal{F}\sum_{n=1}^\infty a_ne_n$ with $a_k = 0$, it follows that $\ker e^\sharp_k\subseteq [e_n]_{n\neq k}$. If $e^\sharp_k$ were discontinuous, then its kernel would be dense, and so would $[e_n]_{n\neq k}$. This is a contradiction.

(3)$\saa$(5): Assume that the minimal compatible filter $\mathcal{A}$ is analytic.  We define  a binary predicate $\Psi$ on subsets of $\ku P(\N)$ by letting for $B,C\subseteq \ku P(\N)$
\maths{
\Psi(B,C) \;\;\equi\;\; 
& \a x\subseteq y\subseteq \N\; (x\in B\to y\notin C)\;\;\&\\
&\a x,y\subseteq \N \;( x,y\in B\to x\cap y\notin C)\;\;\&\\
&\a x\subseteq \N \;(x \text{ is cofinite }\to x\notin C)\;\;\&\\
& \emptyset\notin B.
}
Observe that, if $\sim\!\mathcal{F}$ denotes the complement of a set $\mathcal{F}\subseteq \ku P(\N)$, then $\Psi(\mathcal{F},\sim\! \mathcal{F})$ holds if and only if $\mathcal{F}$ is a proper filter on $\N$ containing all cofinite sets. 

Consider now the conjunction 
$$
\Gamma(B,C)\;\;\equi\;\; \Phi(B)\;\;\&\;\; \Psi(B,C),
$$
where $\Phi$ is defined as in (\ref{phi}), and observe that $\Gamma$ is a {\em hereditary} predicate in both variables, i.e., passes to subsets, and is {\em continuous upwards in the second variable}, i.e., if $C_1\subseteq C_2\subseteq \ldots$ and $\Gamma(B,C_n)$ hold for all $n$, then also $\Gamma(B,\bigcup_nC_n)$. Furthermore, $\Gamma$ is ${\bf \Pi}^1_1$ on ${\bf \Sigma}^1_1$. That is, if $Y$ is a Polish space and $B, C\subseteq Y\times \ku P(\N)$ are ${\bf \Sigma}^1_1$, then 
$$
\big\{ y\in Y\del \Gamma(B_y, C_y) \text{ holds}\big\}
$$
is ${\bf \Pi}^1_1$ (where for $B\subset Y\times \ku P(\N)$ and $y\in Y$, we denote $B_y=\{A\in \ku P(\N)\del\,(y,A)\in B\}$).

By the discussion above, we see that, if $\mathcal{A}\subseteq \mathcal{F}\subseteq\ku P(\N)$, then $\mathcal{F}$ is a compatible filter if and only if $\Gamma(\mathcal{F},\sim\! \mathcal{F})$. In particular, $\Gamma(\mathcal{A},\sim\! \mathcal{A})$ and hence, by the Second Reflection Theorem \cite[Theorem 35.16]{Kechris}, there is some Borel set $\mathcal{F}\subseteq \ku P(\N)$ so that $\mathcal{A}\subseteq \mathcal{F}$ and $\Gamma(\mathcal{F},\sim \!\mathcal{F})$. Thus, $\mathcal{F}$ is a compatible Borel filter.
\end{proof}

\begin{rem}
Observe that, if the biorthogonal operator $E:X\maps{}\R^\N$ associated with some $\mathcal{F}$-filter basis $(e_n)$ for $X$ is continuous, then the operator range $E[X]$ is the continuous injective image of a separable Banach space and is therefore a Borel linear subspace of $\R^\N$ \cite[Theorem 15.1]{Kechris}. However, if $\mathcal{F}$ is actually Borel, we have explicit bounds on the Borel complexity of $E[X]$ in terms of the Borel complexity of $\mathcal{F}$. Indeed, 
$$
(a_n)\in E[X] \;\equi\; \a l\; \e k\;       \Big\{m\in \N\Del\, \NORM{\sum_{n=1}^ma_ne_n-\sum_{n=1}^ka_ne_n}<\tfrac1l\Big\}\in \mathcal{F}.
$$
To see this, note that the implication from left to right is immediate. For the implication from right to left, note that, if $k_l$ are such that  
$$
\Big\{m\in \N\Del\, \NORM{\sum_{n=1}^ma_ne_n-\sum_{n=1}^{k_l}a_ne_n}<\tfrac1l\Big\}\in \mathcal{F}
 $$
 for all $l$, then $\big(\sum_{n=1}^{k_l}a_ne_n\big)_l$ is Cauchy and converges to some $x$ so that $Ex=(a_n)$.
\end{rem}

%%%%%%%%%%%%%%%%%%%%%%%%%%%%%%%%%%%%%%%%%%%%%%%%%%%%%%%%%%%
%%%%%%%%%%%%%%%%%%%%%%%%%%%%%%%%%%%%%%%%%%%%%%%%%%%%%%%%%%%
%%%%%%%%%%%%%%%%%%%%%%%%%%%%%%%%%%%%%%%%%%%%%%%%%%%%%%%%%%%
%%%%%%%%%%%%%%%%%%%%%%%%%%%%%%%%%%%%%%%%%%%%%%%%%%%%%%%%%%%
%%%%%%%%%%%%%%%%%%%%%%%%%%%%%%%%%%%%%%%%%%%%%%%%%%%%%%%%%%%
%%%%%%%%%%%%%%%%%%%%%%%%%%%%%%%%%%%%%%%%%%%%%%%%%%%%%%%%%%%
%%%%%%%%%%%%%%%%%%%%%%%%%%%%%%%%%%%%%%%%%%%%%%%%%%%%%%%%%%%
%%%%%%%%%%%%%%%%%%%%%%%%%%%%%%%%%%%%%%%%%%%%%%%%%%%%%%%%%%%

%\section{Higher order Fatou properties}\label{sec:Fatou}

\section*{Acknowledgements}
C.~Rosendal~was partially supported by the U.S.~National Science Foundation under Award Numbers DMS-2246986 and DMS-2204849. A.~Avil\'{e}s was supported by MICIU/AEI /10.13039/501100011033/ and ERDF-A way of making Europe (project PID2021-122126NB-C32). A.~Avil\'{e}s and P.~Tradacete were supported by Fundaci\'{o}n S\'{e}neca - ACyT Regi\'{o}n de Murcia. P.~Tradacete was partially supported by grants PID2020-116398GB-I00, PID2024-162214NB-I00 and CEX2023-001347-S funded by MCIN/AEI/10.13039/501100011033, as well as by a 2022 Leonardo Grant for Researchers and Cultural Creators, BBVA Foundation.

\begin{bibdiv}
\begin{biblist}

\bib{albiac}{book}{
   author={Albiac, Fernando},
   author={Kalton, Nigel J.},
   title={Topics in Banach space theory},
   series={Graduate Texts in Mathematics},
   volume={233},
   edition={2},
   note={With a foreword by Gilles Godefroy},
   publisher={Springer},
   date={2016},
   pages={xx+508},
   isbn={978-3-319-31555-3},
   isbn={978-3-319-31557-7},
%   review={\MR{3526021}},
%   doi={10.1007/978-3-319-31557-7},
}

\bib{Banach}{book}{
   author={Banach, Stefan},
   title={Th\'{e}orie des op\'{e}rations lin\'{e}aires},
   language={French},
   note={Reprint of the 1932 original},
   publisher={\'{E}ditions Jacques Gabay, Sceaux},
   date={1993},
   pages={iv+128},
   isbn={2-87647-148-5},
%   review={\MR{1357166}},
}

\bib{MR0569521}{article}{
   author={Bedingfield, Susan~E.},
   author={Wirth, Andrew},
   title={Norm and order properties of {B}anach lattices},
   journal={J. Austral. Math. Soc. Ser. A},
   volume={29},
   date={1980},
   number={3},
   pages={331--336},
   issn={0263-6115},
%   review={\MR{569521}},
   %doi={10.1017/S1446788700021339},
}

\bib{bossard}{article}{
    AUTHOR = {Bossard, Beno\^it},
     TITLE = {A coding of separable {B}anach spaces. {A}nalytic and
              coanalytic families of {B}anach spaces},
   JOURNAL = {Fund. Math.},
    VOLUME = {172},
      YEAR = {2002},
    NUMBER = {2},
     PAGES = {117--152},
      ISSN = {0016-2736,1730-6329},
%       DOI = {10.4064/fm172-2-3},
       URL = {https://doi.org/10.4064/fm172-2-3},
%       review={\MR{1899225}},
}

\bib{Cichonetal}{book}{
 author={Cicho{\'n}, Jacek},
 author={Kharazishvili, Aleksander},
 author={W{\c{e}}glorz, Bogdan},
 title={Subsets of the real line},
 publisher={{\L}\'od\'z: Wydawnictwo Uniwersytetu {\L}\'odzkiego},
 title={Subsets of the real line},
 pages={232},
 date={1995},
}

\bib{kadets}{article}{
   author={Connor, Jeff},
   author={Ganichev, Mikhail},
   author={Kadets, Vladimir},
   title={A characterization of Banach spaces with separable duals via weak
   statistical convergence},
   journal={J. Math. Anal. Appl.},
   volume={244},
   date={2000},
   number={1},
   pages={251--261},
   issn={0022-247X},
%   review={\MR{1746802}},
%   doi={10.1006/jmaa.2000.6725},
}

\bib{Rancourt}{article}{
   author={de Rancourt, No\'{e}},
   author={Kania, Tomasz},
   author={Swaczyna, Jaros\l aw},
   title={Continuity of coordinate functionals of filter bases in Banach
   spaces},
   journal={J. Funct. Anal.},
   volume={284},
   date={2023},
   number={9},
   pages={Paper No. 109869, 9},
   issn={0022-1236},
%   review={\MR{4549583}},
%   doi={10.1016/j.jfa.2023.109869},
}

\bib{ganichev}{article}{
   author={Ganichev, M.},
   author={Kadets, Vladimir},
   title={Filter convergence in Banach spaces and generalized bases},
   conference={
      title={General topology in Banach spaces},
   },
   book={
      publisher={Nova Sci. Publ., Huntington, NY},
   },
   date={2001},
   pages={61--69},
%   review={\MR{1901534}},
}

\bib{gumenchuk}{article}{
   author={Gumenchuk, Anna},
   author={Karlova, Olena},
   author={Popov, Mikhail},
   title={Order Schauder bases in Banach lattices},
   journal={J. Funct. Anal.},
   volume={269},
   date={2015},
   number={2},
   pages={536--550},
   issn={0022-1236},
%   review={\MR{3348826}},
%   doi={10.1016/j.jfa.2015.04.008},
}

\bib{Hunt}{article}{
   author={Hunt, Richard A.},
   title={On the convergence of Fourier series},
   conference={
      title={Orthogonal Expansions and their Continuous Analogues},
      address={Proc. Conf., Edwardsville, Ill.},
      date={1967},
   },
   book={
      publisher={Southern Illinois Univ. Press, Carbondale, IL},
   },
   date={1968},
   pages={235--255},
%   review={\MR{0238019}},
}

\bib{Just}{book}{
   author={Just, Winfried},
   author={Weese, Martin},
   title={Discovering modern set theory. II},
   series={Graduate Studies in Mathematics},
   volume={18},
   note={Set-theoretic tools for every mathematician},
   publisher={American Mathematical Society, Providence, RI},
   date={1997},
   pages={xiv+224},
   isbn={0-8218-0528-2},
%   review={\MR{1474727}},
%   doi={10.1090/gsm/018},
}

\bib{kania}{article}{
   author={Kania, Tomasz},
   author={Swaczyna, Jaros\l aw},
   title={Large cardinals and continuity of coordinate functionals of filter
   bases in Banach spaces},
   journal={Bull. Lond. Math. Soc.},
   volume={53},
   date={2021},
   number={1},
   pages={231--239},
   issn={0024-6093},
%   review={\MR{4224525}},
%   doi={10.1112/blms.12415},
}

\bib{Kechris}{book}{
   author={Kechris, Alexander S.},
   title={Classical descriptive set theory},
   series={Graduate Texts in Mathematics},
   volume={156},
   publisher={Springer-Verlag, New York},
   date={1995},
   pages={xviii+402},
   isbn={0-387-94374-9},
%   review={\MR{1321597}},
%   doi={10.1007/978-1-4612-4190-4},
}
		
\bib{LT2}{book}{
    AUTHOR = {Lindenstrauss, Joram},
    AUTHOR = {Tzafriri, Lior},
     TITLE = {Classical {B}anach spaces. {II}},
    VOLUME = {97},
 PUBLISHER = {Springer-Verlag, Berlin-New York},
      YEAR = {1979},
     PAGES = {x+243},
      ISBN = {3-540-08888-1},
%  review = {\MR{540367}},
}

\bib{szarek}{article}{
   author={Szarek, Stanis\l aw J.},
   title={A Banach space without a basis which has the bounded approximation
   property},
   journal={Acta Math.},
   volume={159},
   date={1987},
   number={1-2},
   pages={81--98},
   issn={0001-5962},
%   review={\MR{906526}},
%   doi={10.1007/BF02392555},
}

\bib{Tay}{article}{
  title={Unbounded convergences in vector lattices},
  author={Taylor, Mitchell A.},
  year={2019},
   JOURNAL={Master thesis, University of Alberta, Canada}
}

\bib{taylor}{article}{
   author={Taylor, Mitchell~A.},
   author={Troitsky, Vladimir~G.},
   title={Bibasic sequences in Banach lattices},
   journal={J. Funct. Anal.},
   volume={278},
   date={2020},
   number={10},
   pages={108448, 33},
   issn={0022-1236},
%   review={\MR{4067989}},
%   doi={10.1016/j.jfa.2019.108448},
}

\bib{TursiPhD}{article}{
  title={Topics on the Classification and Geometry of Banach Lattices},
  author={Tursi, Mary~A.},
  year={2021},
   JOURNAL={Doctoral~Thesis, University of Illinois Urbana-Champaign}
}

\bib{Uzcategui}{article}{
 author = {Uzc{\'a}tegui, Carlos},
 title = {On the complexity of subspaces of {{\(S_\omega\)}}},
 fjournal = {Fundamenta Mathematicae},
 journal = {Fundam. Math.},
 issn = {0016-2736},
 volume = {176},
 number = {1},
 pages = {1--16},
 year = {2003},
 language = {English},
% doi = {10.4064/fm176-1-1},
 keywords = {54H05,03E15,54D55},
}

\end{biblist}
\end{bibdiv}

%%%%%%%%%%%%%%%%%%%%%%%%%%%%%%%%%%%%%%%%%%%%%%%%%%%%%%%%%%%
%%%%%%%%%%%%%%%%%%%%%%%%%%%%%%%%%%%%%%%%%%%%%%%%%%%%%%%%%%%
%%%%%%%%%%%%%%%%%%%%%%%%%%%%%%%%%%%%%%%%%%%%%%%%%%%%%%%%%%%
%%%%%%%%%%%%%%%%%%%%%%%%%%%%%%%%%%%%%%%%%%%%%%%%%%%%%%%%%%%
%%%%%%%%%%%%%%%%%%%%%%%%%%%%%%%%%%%%%%%%%%%%%%%%%%%%%%%%%%%
%%%%%%%%%%%%%%%%%%%%%%%%%%%%%%%%%%%%%%%%%%%%%%%%%%%%%%%%%%%
%%%%%%%%%%%%%%%%%%%%%%%%%%%%%%%%%%%%%%%%%%%%%%%%%%%%%%%%%%%
%%%%%%%%%%%%%%%%%%%%%%%%%%%%%%%%%%%%%%%%%%%%%%%%%%%%%%%%%%%
%%%%%%%%%%%%%%%%%%%%%%%%%%%%%%%%%%%%%%%%%%%%%%%%%%%%%%%%%%%
%%%%%%%%%%%%%%%%%%%%%%%%%%%%%%%%%%%%%%%%%%%%%%%%%%%%%%%%%%%

%%%%%%%%%%%%%%%%%%%%%%%%%%%%%%%%%%%%%%%%%%%%%%%%%%%%%%%%%%%
%%%%%%%%%%%%%%%%%%%%%%%%%%%%%%%%%%%%%%%%%%%%%%%%%%%%%%%%%%%
%%%%%%%%%%%%%%%%%%%%%%%%%%%%%%%%%%%%%%%%%%%%%%%%%%%%%%%%%%%
%%%%%%%%%%%%%%%%%%%%%%%%%%%%%%%%%%%%%%%%%%%%%%%%%%%%%%%%%%%
%%%%%%%%%%%%%%%%%%%%%%%%%%%%%%%%%%%%%%%%%%%%%%%%%%%%%%%%%%%
%%%%%%%%%%%%%%%%%%%%%%%%%%%%%%%%%%%%%%%%%%%%%%%%%%%%%%%%%%%
%%%%%%%%%%%%%%%%%%%%%%%%%%%%%%%%%%%%%%%%%%%%%%%%%%%%%%%%%%%
%%%%%%%%%%%%%%%%%%%%%%%%%%%%%%%%%%%%%%%%%%%%%%%%%%%%%%%%%%%

\end{document}